\newcommand*{\C}{\mathbb{C}}%.............................C
\newcommand*{\R}{\mathbb{R}}%.............................R
\newcommand*{\Z}{\mathbb{Z}}%.............................Z
\newcommand*{\N}{\mathbb{N}}%.............................N
\renewcommand*{\geq}{\geqslant}%..........................>=
\renewcommand*{\leq}{\leqslant}%..........................<=
\newtheoremstyle{erdfn}% name
  {}%      Space above, empty = `usual value'
  {}%      Space below
  {\itshape}% Body font
  {}%         Indent amount (empty = no indent, \parindent = para indent)
  {\bfseries}% Thm head font
  {}%        Punctuation after thm head
  { }%     Space after thm head: " " = normal interword space;
\newtheoremstyle{erthm}% name
  {}%      Space above, empty = `usual value'
  {}%      Space below
  {\itshape}% Body font
  {}%         Indent amount (empty = no indent, \parindent = para indent)
  {\bfseries}% Thm head font % {\sffamily\bfseries}% Thm head font
  {}%        Punctuation after thm head
  { }%     Space after thm head: " " = normal interword space;
\newtheoremstyle{errem}% name
  {}%      Space above, empty = `usual value'
  {}%      Space below
  {}% Body font
  {}%         Indent amount (empty = no indent, \parindent = para indent)
  {\bfseries}% Thm head font % {\sffamily}% Thm head font
  {}%        Punctuation after thm head
  { }%     Space after thm head: " " = normal interword space;
\theoremstyle{erthm}
\newtheorem{theorem}{Theorem}[section]
\newtheorem{lemma}[theorem]{Lemma}
\theoremstyle{erdfn}
\theoremstyle{errem}
\newtheorem{remark}{Remark}
\numberwithin{equation}{section}
\title[On monotonicity of certain weighted summatory functions associated with $L$-functions]%
      {On monotonicity of certain weighted summatory functions associated with $L$-functions} 
\author[M. Suzuki]{Masatoshi Suzuki}
\address{Department of Mathematics, Tokyo Institute of Technology, 2-12-1 Ookayama, Meguro-ku, Tokyo 152-8551, Japan}
\email{msuzuki@math.titech.ac.jp}
\subjclass[2000]{}
\keywords{}
\begin{abstract}
%
%\end{abstract}
\begin{document}
%..........................................

Dedicated to Professor Akio Fujii on the occasion of his retirement from Rikkyo University. 
\bigskip

\section{Introduction}

Let $\N$ be the set of natural numbers $n=1,2,\cdots$. 
For a sequence of real numbers $\{c(n)\}_{n \in \N}$ and a real-valued locally integrable function $g:(0,\infty) \to \R$, 
we consider a weighted summatory function
\begin{equation} \label{101}
h(x) = \sum_{n=1}^{\infty} c(n) g\left(\frac{n}{x}\right). 
\end{equation}
Typical example of a weight function is the step function $g_{\rm st}$ 
which is defined by $g_{\rm st}(y)=1$ for $0<y \leq 1$, and $g_{\rm st}(y)=0$ for $y>1$. 
In this case, we obtain the usual summatory function $\sum_{n \leq x} c(n)$.  
\smallskip

As is often the case with an arithmetically defined sequence $\{c(n)\}_{n \in \N}$, 
a certain reasonable estimate $h(x) = O(x^A)$ ($x \to \infty$)
is a sufficient or an equivalent condition to the Generalized/Grand Riemann Hypothesis (GRH for short) for some zeta/$L$-function. 
On the other hand, the monotonicity of $\int_{c}^{x} h(t)\,dt$ for large $x \geq c>0$ 
may also be a sufficient or an equivalent condition to the GRH for some zeta/$L$-function. 

In the present paper we study the latter type conditions for a family 
of weighted summatory functions with certain specific weights 
in terms of the sign of $h(x)$, 
since the monotonicity of $\int_{c}^{x}h(t)\,dt$ for large $x$ 
is equivalent to the condition that $h(x)$ has a single sign for large $x$.     
\smallskip

We mention two examples of the monotonic condition. 
The first one is P\'olya's conjecture. 
Let $\lambda(n)=(-1)^{\Omega(n)}$ be the Liouville function, where $\Omega(n)$ is the number of all prime factors of $n$ 
counted with multiplicity. P\'olya~\cite{Po} conjectured that the value of the summatory function
\begin{equation} \label{102}
\sum_{n=1}^{\infty} \lambda(n)\cdot g_{\rm st}\left(\frac{n}{x}\right)
= \sum_{n \leq x} \lambda(n) 
\end{equation}
is nonpositive for $x \geq 2$ and noted that it is a sufficient condition for 
the Riemann Hypothesis (RH for short), but it was disproved by Haselgrove~\cite{MR0104638}. 
However, recently, P\'olya's approach resurrected by Ram Murty ~\cite{MR1986103} 
by considering an analogue of the P\'olya conjecture to $L$-functions of elliptic curves.  
\smallskip

The second example is Chebyshev's conjecture. 
Let $\varpi:\N \to \{0,1\}$ be the characteristic function of odd prime numbers. 
Chebyshev~\cite{Che} asserted  that the weighted summatory function 
\begin{equation} \label{103}
\sum_{n=1}^{\infty} (-1)^{\frac{n-1}{2}}\varpi(n) \cdot g\left(\frac{n}{x}\right) 
\quad (g(y)=\exp(-y))
\end{equation}
is nonpositive for large $x>0$ without a proof. 
Subsequently, Hardy-Littlewood \cite{MR1555148} and Landau \cite{MR1544293} proved independently that 
Chebyshev's assertion is equivalent to the GRH for the Dirichlet $L$-function $L(s,\chi_4)$ 
associated with the primitive non-principal Dirichlet character $\chi_4$ mod $4$. 
Moreover, Knapowski-Tur\'an \cite{MR0272729} proved that the nonpositivity of the weighted summatory function 
\begin{equation} \label{104}
\sum_{n=1}^{\infty} (-1)^{\frac{n-1}{2}}\varpi(n)\log n \cdot g\left(\frac{n}{x}\right) \quad 
(g(y)=\exp(-(\log y)^2))
\end{equation}
for large $x>0$ is also equivalent to the GRH for $L(s,\chi_4)$. 
In the later, Fujii~\cite{MR974088} generalized Chebyshev's equivalence condition to the weight function $g(y)=\exp(-y^\alpha)$ 
for every $0 < \alpha < \alpha_0$ $(\alpha_0>4)$, and conjectured that it holds for all $\alpha>0$. 
Furthermore, he proved that Knapowski-Tur\'an's equivalence condition still holds 
if their weight function is replaced by the inverse Mellin transform $2K_0(2\sqrt{y})$ of $\Gamma(s)^2$, 
where $K_n(x)$ is the $K$-Bessel function of index $n$ 
and $\Gamma(s)$ is the usual gamma function.    
\medskip

As mentioned in the final section of Fujii~\cite{MR974088}, 
we may replace the weight $g$ in \eqref{103} or \eqref{104} by a more general weight. 
Similarly, it is expected that we obtain various equivalence conditions of the RH 
by replacing the weight $g_{\rm st}$ in \eqref{102} by other reasonable weights 
as an application of several standard techniques of analytic number theory
\medskip

The main subject of the present paper is a family of specific weighted summatory functions $h_{f,\omega}^{\langle k \rangle}:(0,\infty) \to \R$ 
associated with general $L$-functions $L(f,s)$ in the sense of Iwaniec-Kowalski \cite[\S5.1]{MR2061214} 
endowed with two parameters $0<\omega<1/2$ and $k \in \N$.  
It is introduced in Section \ref{section_2}.  
The main result is that, for arbitrary fixed $k \geq 2$, 
the monotonicity of $\int_{1}^{x} h_{f,\omega}^{\langle k \rangle}(t)\,dt$ for large $x>0$ for all $0<\omega<1/2$ 
is equivalent to the GRH of $L(f,s)$. 
It is stated precisely in Section \ref{section_2} as Theorem \ref{thm_1} $\sim$ \ref{thm_4}
together with a little comment on a background of $h_{f,\omega}^{\langle k \rangle}$. 
These results are proved in Section \ref{section_4} and Section \ref{section_5} 
using two basic facts stated in Section \ref{section_3}.  
\medskip

\noindent
{\bf Acknowlegements}~
I would like to express my gratitude to Professor Akio Fujii 
for his advice and encouragement over the past several years. 
In particular, I thank for his support during my JSPS research fellowship at Rikkyo University.   

A part of this work was done during my stay at the Nottingham University in August 2010 
which was partially supported by EPSRC grant EP/E049109. 
I thank I. Fesenko for his warm hospitality during my stay. 
This work was supported by Grant-in-Aid for Young Scientists (B) (21740004). 

%
%%%%%%%%%%%%%%%%%%%%%%%%%%%%%%%%%%%%%%%%%%%%%%%%%%%%%%%%%%%%%%%%%%%%%%%%%%%%%%%%%%%%%%
%
\section{Results} \label{section_2}
%
%%%%%%%%%%%%%%%%%%%%%%%%%%%%%%%%%%%%%%%%%%%%%%%%%%%%%%%%%%%%%%%%%%%%%%%%%%%%%%%%%%%%%%
%
We start from two special cases of the main result (Themore \ref{thm_3} in below) for the simplicity of statements. 
That are cases of the Riemann zeta function and Dirichlet $L$-functions. 
The main result will be stated 
after these two special cases and a brief introduction of general $L$-functions. 
We shall give a little comment on the main theorem and its background after Theorem \ref{thm_4}.  
%
%%%%%%%%%%%%%%%%%%%%%%%%%%%%%%%%%%%%%%%%%%%%%%%%%%%%%%%%%%%%%%%%%%%%%%%%%%%%%%%%%%%%%%
%
\subsection{Riemann zeta function} 
%
%%%%%%%%%%%%%%%%%%%%%%%%%%%%%%%%%%%%%%%%%%%%%%%%%%%%%%%%%%%%%%%%%%%%%%%%%%%%%%%%%%%%%%
%
Let $\zeta(s)$ be the Riemann zeta function which is defined by the series 
$\sum_{n=1}^{\infty} n^{-s}$ for $\Re(s)>1$ 
and extended to a meromorphic function on $\C$ 
with the unique pole of residue $1$ at $s=1$. 
We denote by $\gamma(s)$ the factor $\pi^{-s/2} \Gamma(s/2)$
of the Riemann xi-function $\xi(s) = s(s-1) \pi^{-s/2} \Gamma(s/2) \zeta(s)$. 
Let $B(z;p,q)$ be the incomplete beta function defined by 
\begin{equation} \label{201}
B(z;p,q) = \int_{0}^{z} x^{p-1}(1-x)^{q-1} \, dx 
\quad (0 \leq z \leq 1, \, \Re(p)>0, \, \Re(q)>0).
\end{equation}
%
%It is expressed as $B(z;p,q)=p^{-1} z^p F(p,1-q;p+1;z)$ 
%by using Gauss's hypergeometric function $F(a,b,c;z)$. 
We use the notation 
\begin{equation} \label{202}
\beta(z;p,q) := B(p,q) - B(z;p,q) =  \int_{z}^{1} x^{p-1}(1-x)^{q-1} \, dx.   
\end{equation}

Let $0<\omega<1/2$. We define the real-valued function $g_\omega$ on $(0,\infty)$ by 
\begin{equation} \label{203}
\aligned
g_\omega(x) 
= \frac{4\omega}{2\omega-1} 
\frac{\pi^\omega}{\Gamma(\omega)}
\left\{ x^{\omega-1}
\, \beta\left(x^2, \frac{3-2\omega}{2},\omega \right) - \frac{2\omega+1}{4\omega} \, x^{-1/2} \, 
\beta\left(x^2, \frac{5-2\omega}{4},\omega \right)
\right\}
\endaligned
\end{equation}
for $0 < x < 1$, and $g_\omega(x) = 0$ for $x \geq 1$. 
In addition, we define 
\begin{equation} \label{204}
c_\omega(n) := n^{\omega}\sum_{d|n} \frac{\mu(d)}{d^{2\omega}}
\end{equation}
for natural numbers $n$, where $\mu(n)$ is the M\"obius function, i.e., 
$\mu(n)=0$ if $n$ is not a square free number, and $\mu(n)=(-1)^k$ if $n$ is the product of $k$ distinct primes. 
$n^\omega c_\omega(n)$ is called Jordan's totient function. 
Finally, we define the real-valued function $h_\omega$ on $(0,\infty)$ by 
\begin{equation} \label{205}
h_\omega(x) = \frac{1}{\sqrt{x}} \sum_{n=1}^{\infty} c_\omega(n) g_\omega\left(\frac{n}{x}\right). 
\end{equation}
Note that the sum on the right-hand side is finite for any $x>0$, 
since $g_\omega$ is supported on $(0,1]$ by its definition. 
Therefore $h_\omega$ is well-defined and supported on $[1,\infty)$. 
\begin{theorem} \label{thm_1}
Let $0 \leq \omega_0 < 1/2$. 
\begin{enumerate}
\item Assume that there exists $x_{\omega} \geq 1$ for every $\omega_0 < \omega < 1/2$ 
such that $h_\omega$ is nonnegative on $(x_{\omega},\infty)$. 
Then $\zeta(s) \not =0$ in the right-half plane $\Re(s) > 1/2 + \omega_0$. 
\item Assume that the RH is valid for $\zeta(s)$. 
Then there exists $x_{\omega} \geq 1$ for every $0 < \omega < 1/2$ 
such that $h_\omega$ is nonnegative on $(x_{\omega},\infty)$. 
\end{enumerate}
In particular the validity of the RH is equivalent to the statement that 
there exists $x_\omega \geq 1$ for every  $0<\omega<1/2$ 
such that $h_\omega$ is nonnegative on $(x_\omega,\infty)$.  
\end{theorem}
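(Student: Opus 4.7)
The strategy is to recognize that the intricate weight $g_\omega$ is tailored so that, after Mellin transform, the arithmetic kernel $\sum_n c_\omega(n) g_\omega(n/x)$ telescopes into a ratio of completed Riemann xi-functions. The two directions of the theorem are then handled by different tools: the explicit formula for sufficiency, and Landau's theorem for necessity.

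\medskip
\emph{Setting up the xi-form.}\
I would begin by computing the Mellin transform $G_\omega(s)=\int_0^\infty g_\omega(y) y^{s-1}\,dy$. The elementary identity $\int_0^1 x^{s-1}\beta(x^2;p,q)\,dx = s^{-1} B(p+s/2,q)$ applies to each of the two incomplete beta pieces in \eqref{203} and produces the same $\Gamma$-ratio; after the algebraic simplification $(2\omega-1)(s-\omega-1) = 4\omega(s-1/2)-(2\omega+1)(s+\omega-1)$ the two contributions collapse into a compact closed form for $G_\omega$. The multiplicative structure of $c_\omega(n)$ gives $\sum_{n\geq 1} c_\omega(n) n^{-s} = \zeta(s-\omega)/\zeta(s+\omega)$ on $\Re(s)>1+\omega$. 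Combining these, together with $\xi(s)=s(s-1)\gamma(s)\zeta(s)$ and the shift identity $\gamma(s+2) = (s/(2\pi))\gamma(s)$ (which folds the extra linear factors into completed xi-values), yields the Mellin--Barnes representation
\[
h_\omega(x) = \frac{1}{2\pi i}\int_{(\sigma)} \frac{\xi(s-\omega)}{(s-1/2)\,\xi(s+\omega)}\,x^{s-1/2}\,ds,\qquad \sigma>1+\omega.
\]
This identity is the real substance of the theorem; both implications follow by contour manipulation on it.

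\medskip
\emph{Sufficiency (RH implies nonnegativity).}\
Assuming RH, I would shift the contour leftward to $\Re(s)=-c'$ for some $0<c'<\omega$, along a sequence of heights $T_j\to\infty$ chosen between consecutive zeros so that the standard bounds on $1/\xi(s+\omega)$ hold on the horizontal cross-segments. The residue theorem produces a main term at $s=1/2$ equal to $\xi(1/2-\omega)/\xi(1/2+\omega)=1$ by the functional equation, plus a sum over nontrivial zeros. Under RH each zero $\rho$ lies on the critical line, so the residue at $s=\rho-\omega$ has magnitude $x^{-\omega}$; the $\Gamma$-factor gives exponential decay of $\xi$ on vertical lines that compensates the corresponding behavior of $1/\xi'(\rho)$, making the zero-sum an oscillating term of size $O(x^{-\omega})$, while the shifted contour integral itself contributes $O(x^{-1/2-c'})$. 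Hence $h_\omega(x)=1+O(x^{-\omega})\to 1$, so $h_\omega$ is positive on some $[x_\omega,\infty)$, proving (2).

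\medskip
\emph{Necessity (nonnegativity implies a zero-free region).}\
Here the strategy is Landau's theorem for Mellin transforms. The trivial bounds $|c_\omega(n)|\ll n^{\omega+\varepsilon}$ and $|g_\omega(y)|\ll y^{\omega-1}$ on $(0,1]$ give the unconditional estimate $h_\omega(x)=O(x^{1/2+\omega+\varepsilon})$, so
\[
F_\omega(s) := \int_{x_\omega}^{\infty} h_\omega(x)\,x^{-s-1/2}\,dx
\]
converges for all sufficiently large $\Re(s)$, with finite abscissa of convergence $\sigma_c$. If $h_\omega\geq 0$ on $[x_\omega,\infty)$, Landau's theorem forces $\sigma_c$ to be a real point at which $F_\omega$ has a singularity. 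By the xi-form, $F_\omega(s)$ differs from $\xi(s-\omega)/((s-1/2)\xi(s+\omega))$ by the entire function $-\int_1^{x_\omega} h_\omega(x)x^{-s-1/2}\,dx$, so the real singularities of $F_\omega$ coincide with those of the kernel. Because $\xi$ has no real zeros, the only real singularity of the kernel is the simple pole at $s=1/2$, whence $\sigma_c=1/2$. Consequently $F_\omega$, and hence the kernel, is analytic on $\{\Re(s)>1/2\}\setminus\{1/2\}$, which forces $\xi(s+\omega)\ne 0$ for $\Re(s)>1/2$, i.e.\ $\zeta(s)\ne 0$ for $\Re(s)>1/2+\omega$. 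The set identity $\bigcup_{\omega_0<\omega<1/2}\{\Re(s)>1/2+\omega\} = \{\Re(s)>1/2+\omega_0\}$ then converts the hypothesis of (1) into the desired zero-free region, and specializing to $\omega_0=0$ in (1) combined with (2) gives the ``in particular'' equivalence. The main technical obstacle I anticipate lies in the sufficiency step: the residue sum over nontrivial zeros is only conditionally convergent, so the contour shift must be executed through carefully chosen heights $T_j$ together with the usual $\log T$-type bounds for $1/\xi$ on horizontal segments between consecutive zeros.
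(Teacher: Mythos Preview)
Your Mellin identity and the appeal to Landau's theorem for part~(1) match the paper's approach. There are, however, two gaps.

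The minor one is in part~(1): from analyticity of $\xi(s-\omega)/((s-\tfrac12)\xi(s+\omega))$ on $\Re(s)>1/2$ you cannot directly conclude $\xi(s+\omega)\ne 0$ there, since a zero of the denominator might be cancelled by a coincident zero of the numerator. The paper handles this by a minimality argument: among zeros $\rho$ with $1/2+\omega_0<\Re(\rho)\le 1$ and bounded height, take one of minimal real part, choose $\omega$ with $1/2+\omega<\Re(\rho)<1/2+2\omega-\omega_0$, and use the forced cancellation $\xi(\rho-2\omega)=0$ together with the functional equation to manufacture a zero of strictly smaller real part in the same strip, a contradiction. Your version is easily repaired (for a fixed putative zero $\rho$, all but countably many $\omega$ avoid $\xi(\rho-2\omega)=0$), but as written the step is unjustified.

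The serious gap is in part~(2). Shifting to $\Re(s)=-c'$ crosses the line $\Re(s)=1/2-\omega$ on which, under RH, all poles of $\Theta_\omega(s)=\xi(s-\omega)/\xi(s+\omega)$ sit, and produces a residue sum $\sum_\rho \xi(\rho-2\omega)\,x^{-\omega+i\gamma}/\bigl((\rho-\omega-\tfrac12)\xi'(\rho)\bigr)$. There is no known lower bound on $|\xi'(\rho)|$; the $\Gamma$-decay in $\xi(\rho-2\omega)$ is exactly matched by that in $\xi'(\rho)$, so nothing forces this sum to converge, even conditionally, and your claimed $O(x^{-\omega})$ bound is unsupported. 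Choosing heights $T_j$ between zeros controls horizontal segments but not the accumulated residues. The paper avoids the issue entirely: under RH the kernel is analytic on $\Re(s)>1/2-\omega$, so one shifts only to $\Re(s)=1/2-\delta$ with $0<\delta<\omega$, picking up the single residue $1$ at $s=1/2$. On that line the conditional Lindel\"of bounds $\zeta(\sigma+it)^{\pm1}\ll|t|^{\epsilon}$ together with Stirling give $\Theta_\omega(1/2-\delta+it)\ll|t|^{\delta+\epsilon}$; truncating at $T=x^A$ with $A$ just below $1$ then yields $h_\omega(x)=1+O(x^{-B})$ for some $B>0$, and no sum over zeros is ever formed.
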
 
\begin{remark} 
The function $g_\omega$ has only one zero $y_\omega$ in $(0,1)$ 
which tends to zero as $\omega \to 0^+$, and $g_\omega(x)>0$ on $(y_\omega,1)$. 
Moreover, $\sqrt{y}\,g_\omega(y) \to 1$ uniformly on any compact subset of $(0,1)$ as $\omega \to 0^+$. 
On the other hand, $c_\omega(n)>0$ for all $n \in \N$ because of the Euler product formula of $\sum_{n \geq 1}c_\omega(n) n^{-s}$. 
Therefore $h_\omega(x) > 0$ for all $1 < x < y_\omega^{-1}$ without any assumptions. 
Hence the proper interest is in values of $h_\omega(x)$ for $x>y_{\omega}^{-1}$. 
\end{remark}
%
%%%%%%%%%%%%%%%%%%%%%%%%%%%%%%%%%%%%%%%%%%%%%%%%%%%%%%%%%%%%%%%%%%%%%%%%%%%%%%%%%%%%%%
%
\subsection{Dirichlet $L$-functions} 
%
%%%%%%%%%%%%%%%%%%%%%%%%%%%%%%%%%%%%%%%%%%%%%%%%%%%%%%%%%%%%%%%%%%%%%%%%%%%%%%%%%%%%%%
%
Let $\chi$ be a real primitive Dirichlet character modulo $q$, and let 
\begin{equation*}
\delta = \delta_\chi = 
\begin{cases}
0 & \text{if $\chi(-1)=1$,} \\
1 & \text{if $\chi(-1)=-1$.}
\end{cases}
\end{equation*}
%
%As for the theory of Dirichlet characters, 
%see Chapter 3 of ~\cite{MR2061214}, for example. 
Let $L(s,\chi)$ be the Dirichlet $L$-function associated with $\chi$ 
which is defined by the series $\sum_{n=1}^{\infty} \chi(n) n^{-s}$ for $\Re(s)>1$ 
and extended to an entire function on $\C$. 
We denote by $\gamma(s,\chi)$ the factor $\pi^{-s/2}\Gamma((s+\delta)/2)$ 
of the completed $L$-function $\Lambda(s,\chi)=q^{s/2}\pi^{-s/2}\Gamma((s+\delta)/2)L(s,\chi)$. 
\smallskip

Let $0<\omega<1/2$. We define the function $g_{\chi,\omega}$ on $(0,\infty)$ by 
\begin{equation} \label{206}
g_{\chi,\omega}(x) 
= \frac{\pi^{\omega}}{\Gamma(\omega)} \frac{1}{\sqrt{x}} ~ \beta\left(x^2;\frac{1 + 2\delta - 2\omega}{4}, \omega \right) 
\end{equation}
for $0<x<1$, and $g_{\chi,\omega}(x)=0$ for $x\geq 1$, 
where $\beta(z;p,q)$ is the function defined in \eqref{202}. 
By using \eqref{204}, we define 
\begin{equation} \label{207}
c_{\chi,\omega}(n) := \chi(n)c_\omega(n)
\end{equation} 
for natural numbers $n$.  
Finally, we define the real-valued function $h_{\chi,\omega}$ on $(0,\infty)$ by 
\begin{equation} \label{208}
h_{\chi,\omega}(x) = q^{-\omega} \frac{1}{\sqrt{x}} \sum_{n=1}^{\infty} c_{\chi,\omega}(n)\, g_{\chi,\omega}\left(\frac{n}{x}\right).
\end{equation}
As well as $h_{\omega}$ of \eqref{205}, $h_{\omega,\chi}$ is well-defined and supported on $[1,\infty)$. 
\begin{theorem} \label{thm_2}
Let $0 \leq \omega_0 < 1/2$. 
\begin{enumerate}
\item Assume that $L(s,\chi)\not=0$ for real $s \in (1/2+\omega_0,1]$. 
Moreover assume that there exists $x_{\omega} \geq 1$ for every $\omega_0 < \omega < 1/2$ 
such that $h_{\chi,\omega}$ of \eqref{208} does not change sign on $(x_{\omega},\infty)$. 
Then $L(s,\chi) \not =0$ in the right-half plane $\Re(s) > 1/2 + \omega_0$. 
\item Assume that the GRH is valid for $L(s,\chi)$. 
Then there exists $x_{\omega} \geq 1$ for every $0 < \omega < 1/2$ 
such that $h_{\chi, \omega}$ does not change sign on $(x_{\omega},\infty)$. 
\end{enumerate}
In particular the validity of the GRH for $L(s,\chi)$ is equivalent to the statement that 
there exists $x_\omega \geq 1$ for every  $0<\omega<1/2$ 
such that $h_{\chi, \omega}$ does not change sign on $(x_\omega,\infty)$.  
\end{theorem}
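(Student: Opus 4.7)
The plan is to mimic the strategy that will be used for Theorem~\ref{thm_1}, with the Dirichlet level $q$ and parity $\delta$ threaded through the formulas. The heart of everything is a Mellin transform identity
\begin{equation} \label{plan-mellin}
\int_{1}^{\infty} h_{\chi,\omega}(x)\, x^{-s-1}\,dx
= \frac{1}{s}\,\frac{\Lambda(s+\tfrac12-\omega,\chi)}{\Lambda(s+\tfrac12+\omega,\chi)},
\end{equation}
valid for $\Re(s)$ large and then meromorphically on $\C$, from which both statements will follow once the Landau-type criterion recorded in Section~\ref{section_3} is available.

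To prove~\eqref{plan-mellin} I would first recognise, from the multiplicativity of $\chi c_\omega$ and the M\"obius-style definition of $c_\omega$ in~\eqref{204}, that the generating Dirichlet series factors as $\sum_{n\geq 1} c_{\chi,\omega}(n)\,n^{-s-1/2} = L(s+\tfrac12-\omega,\chi)/L(s+\tfrac12+\omega,\chi)$. The second, analytic step is to compute the one-variable Mellin transform $\int_{0}^{1} g_{\chi,\omega}(u)\, u^{s-1/2}\,du$; substituting $v=u^{2}$ and swapping the order of integration in the incomplete beta function from~\eqref{202} reduces the integral to a complete beta integral and yields $\pi^{\omega}\,s^{-1}\,\Gamma((s+\tfrac12+\delta-\omega)/2)/\Gamma((s+\tfrac12+\delta+\omega)/2)$. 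The prefactor $q^{-\omega}$ that multiplies $h_{\chi,\omega}$ in~\eqref{208} is then exactly what is needed to convert this product of gamma and $L$ ratios into the ratio of completed $L$-functions appearing in~\eqref{plan-mellin}.

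For part~(1), suppose $h_{\chi,\omega}$ has a single sign on $(x_\omega,\infty)$ for each $\omega\in(\omega_0,\tfrac12)$. The right-hand side of~\eqref{plan-mellin} is meromorphic, with the simple pole at $s=0$ from the prefactor $1/s$ together with poles contributed by the zeros of $\Lambda(s+\tfrac12+\omega,\chi)$, that is, by the nontrivial zeros of $L(s,\chi)$. A real pole can arise only from a real nontrivial zero, and the hypothesis that $L(s,\chi)\neq 0$ on $(\tfrac12+\omega_0,1]$, combined with the functional equation $\Lambda(s,\chi)=\Lambda(1-s,\chi)$, confines any real nontrivial zero to $[\tfrac12-\omega_0,\tfrac12+\omega_0]$; the corresponding real poles then lie in $[-\omega_0-\omega,\omega_0-\omega]\subset(-\infty,0)$. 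Hence the rightmost real singularity of~\eqref{plan-mellin} is $s=0$, and the Landau-type criterion from Section~\ref{section_3} forces the right-hand side to be analytic on $\Re(s)>0$. Any zero $\rho$ of $L(s,\chi)$ with $\Re(\rho)>\tfrac12+\omega$ would contradict this by producing a pole at $s=\rho-\tfrac12-\omega$, so no such zero exists, and letting $\omega\downarrow\omega_0$ gives $L(s,\chi)\neq 0$ on $\Re(s)>\tfrac12+\omega_0$.

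For part~(2), assuming the GRH, I would invert~\eqref{plan-mellin} by a vertical Mellin contour at $\Re(s)=c>0$ and shift it leftward to $\Re(s)=-A$, picking up the residue at $s=0$ and the residues at $s=\rho-\tfrac12-\omega$ for $\rho$ running over the critical-line zeros. Because the root number of a real primitive Dirichlet character equals $+1$, the functional equation gives $\Lambda(\tfrac12-\omega,\chi)=\Lambda(\tfrac12+\omega,\chi)$, so the residue at $s=0$ is $1$; this is the main term. Each of the remaining residues is of modulus $\ll x^{-\omega}$ on GRH, so once the sum over zeros is shown to converge (perhaps after a Ces\`aro or smoothing step) and the shifted contour integral is estimated using convexity bounds for $\Lambda(s,\chi)$, one obtains $h_{\chi,\omega}(x)=1+O(x^{-\omega+\varepsilon})$, which is positive (hence of one sign) for all sufficiently large $x$. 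I expect the rigorous handling of this sum over zeros and of the horizontal and vertical tails of the contour to be the main obstacle, requiring standard but careful zero-density and growth estimates for $\Lambda(s,\chi)$; the algebraic skeleton (the Mellin identity and the identification of the main term via the functional equation) is a direct adaptation of the Riemann zeta case.
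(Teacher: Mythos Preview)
Your Mellin identity \eqref{plan-mellin} is correct and matches the paper's formula \eqref{403} (after the shift $s\mapsto s+\tfrac12$, with $r=0$ and $k=1$). But both halves of your argument diverge from the paper in ways worth noting.

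\medskip

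\textbf{Part (1): a genuine gap.} You write that a zero $\rho$ with $\Re(\rho)>\tfrac12+\omega$ ``would contradict this by producing a pole at $s=\rho-\tfrac12-\omega$.'' This is not automatic: the right-hand side of \eqref{plan-mellin} is a \emph{ratio}, and the would-be pole is cancelled whenever $\Lambda(\rho-2\omega,\chi)=0$ as well. Landau's theorem only tells you that $\Theta_{\chi,\omega}(s):=\Lambda(s-\omega,\chi)/\Lambda(s+\omega,\chi)$ is holomorphic on $\Re(s)>\tfrac12$, not that the denominator is nonvanishing there. The paper closes this gap by a different device (Section~\ref{section_4}): assume a zero $\rho$ off the real axis with $\Re(\rho)>\tfrac12+\omega_0$, choose one with \emph{minimal} real part in a bounded box, pick $\omega$ with $\tfrac12+\omega_0<\tfrac12+\omega<\Re(\rho)<\tfrac12+2\omega-\omega_0$, and use holomorphy of $\Theta_{\chi,\omega}$ together with the functional equation to manufacture a zero $1-\rho+2\omega$ with strictly smaller real part in the same box, contradicting minimality. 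Your outline could alternatively be repaired by varying $\omega$ continuously and invoking discreteness of the zero set (if $\rho-2\omega$ were a zero for every $\omega$ in an open interval, $\Lambda(\cdot,\chi)$ would vanish on a segment), but you do not say this.

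\medskip

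\textbf{Part (2): a different route.} Your plan is an explicit formula: push the contour past all the critical-line poles and sum residues over zeros. You correctly flag the obstacle --- controlling the sum over zeros and the far-left contour --- and this is exactly what the paper \emph{avoids}. In the proof of Theorem~\ref{thm_3}\,(2-b) the contour is moved only to $\Re(s)=\tfrac12-\delta$ with $0<\delta<\omega$, picking up the single residue $\varepsilon(\chi)$ at $s=\tfrac12$; the shifted vertical integral is then bounded directly using the GRH-conditional Lindel\"of-type estimates of Lemma~\ref{lem_503} for $L$ and $1/L$, together with Stirling, yielding $h_{\chi,\omega}(x)=\varepsilon(\chi)+O(x^{-B})$. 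No sum over zeros, no Ces\`aro smoothing. Your approach could in principle be made to work, but the paper's is both shorter and sidesteps precisely the technical point you identified as the main obstacle.
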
 

\begin{remark} 
We have $L(s,\chi) \not=0$ for real $s$ 
if the series $\theta_\chi(x)=\sum_{n \in {\Bbb Z}} n^{\delta}\chi(n) e^{-\pi n^2 x/q}$ 
attached to $L(s,\chi)$ has a single sign on $(0,\infty)$ by the formula  
$\Lambda(s,\chi)=\int_{0}^{\infty} \theta_\chi(x^2) \, x^{s+\delta-1} \, dx$ 
which is valid for all $s \in \C$. 
We may check whether $\theta_\chi(x)$ has a single sign by an elementary way 
if the modulo $q$ is small. 
\end{remark}

%
%%%%%%%%%%%%%%%%%%%%%%%%%%%%%%%%%%%%%%%%%%%%%%%%%%%%%%%%%%%%%%%%%%%%%%%%%%%%%%%%%%%%%%
%
\subsection{General $L$-functions}
%
%%%%%%%%%%%%%%%%%%%%%%%%%%%%%%%%%%%%%%%%%%%%%%%%%%%%%%%%%%%%%%%%%%%%%%%%%%%%%%%%%%%%%%
%
In order to state the main result, 
we specify the meaning of ``$L$-function'' in the present paper  
according to Iwaniec-Kowalski \cite[\S5.1]{MR2061214}. 
We say that $L(f, s)$ is an $L$-function with the symbol $f$ if we have the following data and conditions: 
\begin{enumerate}
\item[(L-1)] A Dirichlet series with Euler product of degree $d \geq 1$ 
\[
L(s, f) 
= \sum_{n=1}^{\infty} \frac{\lambda_f(n)}{n^s} 
= \prod_{p} \left(1 - \frac{\alpha_{f,1}(p)}{p^s}\right)^{-1} \cdots \left(1 - \frac{\alpha_{f,d}(p)}{p^s}\right)^{-1}
\]
with $\lambda_f(1)=1$, $\lambda_f(n) \in \C$, and $\alpha_{f,i}(p) \in \C$. 
We assume that the series and the Euler product converges absolutely for $\Re (s) > 1$, and 
the local parameters $\alpha_{f,i}(p)$ $(1 \leq i \leq d)$ satisfy 
$|\alpha_{f,i}(p)| < p$ for all prime numbers $p$. \\
\item[(L-2)] A gamma factor
\[
\gamma(f,s) = \pi^{-ds/2} \prod_{j=1}^{d} \Gamma\left( \frac{s+\kappa_j}{2}\right)
\]
with $\kappa_j \in \C$. 
We assume that the local parameters $\kappa_i$ ($1 \leq j \leq d$) are either real or come in conjugate pairs. 
Moreover $\Re(\kappa_j) > -1$. \\
%This last condition tells us that $\gamma(f, s)$ has no zero in $\C$ and no pole for $\Re(s) \geq 1$. 
%
\item[(L-3)] An integer $q(f) \geq 1$ such that $\alpha_{f,i}(p) \not= 0$ 
for $p \nmid q(f)$ and $1 \leq i \leq d$. \\
\item[(L-4)] The complete $L$-function defined by 
\[
\Lambda(f,s) = q(f)^{\frac{s}{2}} \gamma(f,s) L(f,s)
\]
admits analytic continuation to a meromorphic function for $s \in \C$ of order $1$, 
with at most poles at $s = 0$ and $s = 1$ with the same order $r \geq 0$. 
Moreover it satisfies the functional equation 
\[
\Lambda(f, s) = \varepsilon(f)\Lambda( \bar{f}, 1 - s),
\]
where $\bar{f}$ is an object associated with $f$ (the dual of $f$) 
for which $\lambda_{\bar{f}}(n) = \overline{\lambda_f(n)}$, 
$\gamma(\bar{f}, s) = \gamma(f, s)$, $q(\bar{f}) = q(f)$ 
and $\varepsilon(f)$ is a complex number of absolute value $1$.  
\end{enumerate}
%
%The analytic conductor  
%
%\begin{equation} \label{acond}
%{\frak q}(f,s) = q(f) \prod_{j=1}^{d} (|s+\kappa_j| + 3)
%\end{equation}
%
%is used often for several estimate attached to $L(f,s)$. 
\medskip

\noindent
It is said that $L(f,s)$ satisfies the Ramanujan-Petersson conjecture 
if for any $1 \leq i \leq d$ we have $|\alpha_{f,i}(p)|=1$ for all $p \nmid q(f)$ 
and $|\alpha_{f,i}(p)| \leq 1$ otherwise. This implies, in particular, $\lambda_f(n) \leq \tau_d(n) \ll_\epsilon n^\epsilon$ 
for the Dirichlet coefficients $\tau_d(n)$ of $\zeta^d(s)$. 
The Grand Riemann Hypothesis (GRH for short) refers to the statement that 
all zeros of $L(f,s)$ in the critical strip $0<\Re(s)<1$ lie on the critical line $\Re(s)=1/2$. 
\smallskip

The Riemann zeta function and Dirichlet $L$-functions 
are $L$-functions in this sense. 
Products $\zeta(s)^{k_0}L(s,\chi_1)^{k_1}\cdots L(s,\chi_l)^{k_l}$ ($k_j \in \Z_{\geq 0}$, $0 \leq j \leq l$) 
of them are also $L$-functions. 
Other typical examples of $L$-functions are $L$-functions $L(s,\phi)$  
associated with normalized Hecke eigen holomorphic cusp forms $\phi$. 
As for the theory of holomorphic cusp forms, see Chapter 14 of ~\cite{MR2061214}, for example. 
We refer these examples as $L$-functions of the symbol $f={\mathbf 1}$, $f=\chi$, and $f=\phi$, respectively.  
\medskip

Now we generalize Theorem \ref{thm_1} and \ref{thm_2} to $L$-functions. 
An $L$-function $L(f,s)$ is called self-dual if $f=\bar{f}$. 
Hereafter we assume that $L(f,s)$ is self-dual. 
Then coefficients $\lambda_f(n)$ are real for all $n$ by definition of the dual $\bar{f}$, and $\varepsilon(f) \in \{\pm 1\}$. 
The Rimeann zeta function and Dirichlet $L$-functions associated with real primitive characters are self-dual $L$-functions. 
An $L$-function attached to a normalized Hecke eigen holomorphic cusp form $\phi$ satisfying $W\phi=\phi$ for the Fricke involution $W$ 
is also self-dual $L$-function. 
\medskip

Let $0<\omega<1/2$. We define functions $g_{f,\omega,j}$, $1 \leq j \leq d$ on $(0,\infty)$ by 
\begin{equation} \label{209}
g_{f,\omega,j}(x) 
: = \frac{2 \pi^{\omega}}{\Gamma(\omega)} \, x^{-\omega+\kappa_j} (1-x^2)^{\omega-1}
\end{equation}
for $0<x<1$, and $g_{f,\omega,j}(x)=0$ for $x \geq 1$. In addition, we define
\[
p_1(x) : =\delta_1(x) - 2\omega(1 - 2 \omega) \, x^{\omega-1} - 2\omega(1 + 2 \omega) \, x^{\omega}
\]
for $0<x<1$, and $p_1(x)=0$ for $x>1$, 
where $\delta_1(x)$ is the delta function at $x=1$, and 
\[
p_r(x) : = \int_{0}^{1} \cdots \int_{0}^{1} p_1\left(\frac{x}{y_1 \cdots y_{r-1}}\right) \, 
p_1(y_1) \cdots p_1(y_{r-1})\,\frac{dy_1}{y_1} \cdots \frac{dy_{r-1}}{y_{r-1}} \quad (r \geq 2). 
\] 
By using these functions, we define
\begin{equation} \label{210}
g_{f,\omega}^{\langle 0 \rangle}(x) :=  (p_r \ast g_{f,\omega,1} \ast \cdots \ast g_{f,\omega,d})(x),
\end{equation}
where $r \geq 0$ is the order of the pole of $\Lambda(f,s)$ at $s=1$  
and $\ast$ means the multiplicative convolution $(F \ast G)(x) = \int_{0}^{\infty} F(x/y)G(y) \, y^{-1}dy$. 
Note that $g_{f,\omega}^{\langle 0 \rangle}(x)=0$ for $x>1$ by its definition. 
We define numbers $\mu_f(n)$ by the Dirichlet coefficients of $L(f,s)^{-1}$: 
\begin{equation} \label{211}
\frac{1}{L(f,s)} 
= \sum_{n=1}^{\infty} \frac{\mu_f(n)}{n^s}
= \prod_{p} \left(1 - \frac{\alpha_{f,1}(p)}{p^s}\right) \cdots \left(1 - \frac{\alpha_{f,d}(p)}{p^s}\right),
\end{equation}
and define the numbers $c_{f,\omega}(n)$ by  
\begin{equation} \label{212}
c_{f,\omega}(n) = n^\omega \sum_{d|n} \frac{\mu_f(d)\lambda_f(n/d) }{d^{2\omega}}.
\end{equation} 
Moreover we define the function $h_{f,\omega}^{\langle 0 \rangle}$ on $(0,\infty)$ by 
\begin{equation} \label{213}
h_{f,\omega}^{\langle 0 \rangle}(x) = q(f)^{-\omega} \frac{1}{\sqrt{x}} 
\sum_{n=1}^{\infty} c_{f,\omega}(n) g_{f,\omega}^{\langle 0 \rangle}\left(\frac{n}{x}\right).
\end{equation}
As well as \eqref{205} and \eqref{208}, the right-hand side of \eqref{213} is a finite sum for any fixed $x \geq 1$ 
and vanishes for $0<x<1$. 
Finally we define the function $h_{f,\omega}^{\langle k \rangle}$ 
for $k \in \N$ by  
\begin{equation} \label{214}
h_{f,\omega}^{\langle k \rangle}(x):=\int_{1}^{x} h_{f,\omega}^{\langle k-1 \rangle}(y) \frac{dy}{y}.
\end{equation}
As proved in Lemma \ref{lem_401} below, $h_{f,\omega}^{\langle k \rangle}$ is a well-defined continuous function on $(0,\infty)$. 
We have 
\begin{equation} \label{217}
h_{f,\omega}^{\langle k \rangle}(x) = q(f)^{-\omega} \frac{1}{\sqrt{x}} 
\sum_{n=1}^{\infty} c_{f,\omega}(n) g_{f,\omega}^{\langle k \rangle}\left(\frac{n}{x}\right)
\end{equation}
if we put 
\begin{equation} \label{216}
g_{f,\omega}^{\langle k \rangle}(x):=\int_{x}^{1} \sqrt{\frac{y}{x}} \, g_{f,\omega}^{\langle k-1 \rangle}(y) \frac{dy}{y}.
\end{equation}
As proved in Lemma \ref{lem_401} below, $g_{f,\omega}^{\langle k \rangle}$ is a well-defined continuous function on $(0,\infty)$. 
 
The function $h_{f,\omega}^{\langle 1 \rangle}$ is equal to \eqref{205} (resp. \eqref{208}) 
if $f={\mathbf 1}$ (resp. $f=\chi$) by Lemma \ref{lem_402} below. 
%The function \eqref{210} is equal to \eqref{203} (resp. \eqref{206}) for $f={\mathbf 1}$ (resp. $f=\chi$) as see in Lemma \ref{lem_402} below.  
%The numbers \eqref{212} are equal to \eqref{204} (resp. \eqref{207}) 
%for $f={\mathbf 1}$ (resp. $f=\chi$). 
Now Theorems \ref{thm_1} and \ref{thm_2} are generalized as follows: 
\begin{theorem} \label{thm_3}
Let $0 \leq \omega_0 < 1/2$. Let $L(f,s)$ be a self-dual $L$-function in the sense of the above. 
\begin{enumerate}
\item[(1)] 
Assume that $L(f,s)\not=0$ for any real $s \in (1/2+\omega_0,1]$. 
Moreover assume that the following condition holds for some natural number $k \geq 1$: 
there exists $x_{\omega,k} \geq 1$ for every $\omega_0 < \omega < 1/2$ 
such that $h_{f,\omega}^{\langle k \rangle}$ of \eqref{214} does not change sign on $(x_{\omega,k},\infty)$. 
Then $L(f,s) \not =0$ in the right-half plane $\Re(s) > 1/2 + \omega_0$. 
\item[(2-a)] Assume that the GRH is valid for $L(f,s)$. 
Then there exists $x_{\omega,k} \geq 1$ for every natural number $k \geq 2$ and every real number $0 < \omega < 1/2$ 
such that $h_{f, \omega}^{\langle k \rangle}$ does not change sign on $(x_{\omega,k},\infty)$. 
\item[(2-b)] Assume that $d=1$ $($see {\rm (L-1))} and that   
the Ramanujan-Petersson conjecture and the GRH are valid for $L(f,s)$. 
Then there exists $x_{\omega,k} \geq 1$ for every natural number $k \geq 1$ and every real number $0 < \omega < 1/2$ 
such that $h_{f, \omega}^{\langle k \rangle}$ does not change sign on $(x_{\omega,k},\infty)$. 
\end{enumerate}
\end{theorem}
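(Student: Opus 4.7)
Proof proposal. A direct beta-integral computation yields
\[
\widetilde{g_{f,\omega,j}}(s)=\pi^{\omega}\frac{\Gamma((s-\omega+\kappa_j)/2)}{\Gamma((s+\omega+\kappa_j)/2)},\qquad \widetilde{p_1}(s)=\frac{s(s-1)}{(s+\omega)(s+\omega-1)},
\]
so $\prod_{j=1}^{d}\widetilde{g_{f,\omega,j}}(s)=\gamma(f,s-\omega)/\gamma(f,s+\omega)$ and $\widetilde{p_r}=\widetilde{p_1}^{\,r}$. Combining these with the Dirichlet series identity $\sum_{n}c_{f,\omega}(n)\,n^{-s}=L(f,s-\omega)/L(f,s+\omega)$ and the substitution $y=n/x$ gives the key Mellin identity
\[
\widetilde{h_{f,\omega}^{\langle 0\rangle}}(s)=\widetilde{p_r}\bigl(\tfrac12-s\bigr)\,\frac{\Lambda(f,\tfrac12-s-\omega)}{\Lambda(f,\tfrac12-s+\omega)},\qquad \widetilde{h_{f,\omega}^{\langle k\rangle}}(s)=(-s)^{-k}\,\widetilde{h_{f,\omega}^{\langle 0\rangle}}(s),
\]
the second coming from the fact that $\int_{1}^{x}(\cdot)\,dy/y$ multiplies Mellin transforms by $-1/s$. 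The factor $\widetilde{p_r}(\tfrac12-s)$ cancels the zeros that $1/\Lambda(f,\tfrac12-s+\omega)$ acquires from the poles of $\Lambda$ at $0,1$, so that the remaining singularities of the right-hand side are: a pole of order $k$ at $s=0$; order-$r$ poles at $s=\pm\tfrac12\mp\omega$ (if $r\geq 1$); and simple poles at $s=\tfrac12+\omega-\rho$ for each nontrivial zero $\rho$ of $L(f,s)$.

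For parts (2-a) and (2-b), under the GRH I would invert this Mellin transform on a line deep in the convergence strip and shift past all the listed singularities, arriving at an explicit formula
\[
h_{f,\omega}^{\langle k\rangle}(x)=M(x)+x^{-\omega}E(x)+(\text{smaller terms})
\]
in which $M(x)$ gathers the real-pole residues and $E(x)=\sum_{\gamma}R_\gamma\,x^{-i\gamma}/(\omega-i\gamma)^{k}$ is the oscillating sum over critical zeros $\rho=\tfrac12+i\gamma$. Using the self-dual functional equation $\Lambda(f,\tfrac12-\omega)=\varepsilon(f)\,\Lambda(f,\tfrac12+\omega)$ and the positivity $\widetilde{p_r}(\tfrac12)=(1-4\omega^{2})^{-r}>0$, the residue at $s=0$ contributes a polynomial in $\log x$ of degree $k-1$ with leading coefficient $-\varepsilon(f)(1-4\omega^{2})^{-r}/(k-1)!$, of definite sign; when $r\geq 1$, the residues at $s=\pm\tfrac12\mp\omega$ give definite-sign terms of orders $x^{\pm 1/2\mp\omega}$, of which $x^{1/2+\omega}$ dominates. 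The factor $(\omega-i\gamma)^{-k}$ provides $|\gamma|^{-k}$ decay for $E(x)$; combined with convexity-type bounds for $\Lambda(f,\tfrac12-2\omega+i\gamma)/\Lambda'(f,\tfrac12+i\gamma)$ this secures absolute convergence when $k\geq 2$, giving (2-a). For (2-b), the assumption $d=1$ together with the Ramanujan--Petersson bound sharpens these estimates enough for absolute convergence at $k=1$. In each case, the dominant real-pole residue has definite sign and eventually outweighs the $O(x^{-\omega})$ oscillating remainder.

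For part (1), I would invoke Landau's theorem. The sign hypothesis on $h_{f,\omega}^{\langle k\rangle}$ forces the abscissa of convergence $\alpha_\omega$ of $\int_1^{\infty}h_{f,\omega}^{\langle k\rangle}(x)\,x^{s-1}\,dx$ to be a real singularity of the meromorphic continuation of $\widetilde{h_{f,\omega}^{\langle k\rangle}}(s)$, which must therefore be holomorphic throughout $\Re(s)<\alpha_\omega$. Suppose for contradiction $L(f,s)$ has a zero $\rho_0$ with $\Re\rho_0>\tfrac12+\omega_0$; by the hypothesis $L(f,s)\neq 0$ on $(\tfrac12+\omega_0,1]$ it must be non-real, and its complex conjugate is also a zero, producing a complex pole of $\widetilde{h_{f,\omega}^{\langle k\rangle}}$ at $s_*=\tfrac12+\omega-\bar\rho_0$ whose real part is strictly less than $\omega-\omega_0$. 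Choosing $\omega$ just above $\omega_0$ and comparing with the list of permissible real singularities---namely $\{0,\tfrac12-\omega,-\tfrac12-\omega\}$ together with pullbacks $\tfrac12+\omega-\rho$ of real zeros $\rho\leq\tfrac12+\omega_0$ (all of which have real part $\geq\omega-\omega_0$)---one finds that $s_*$ lies in the region where holomorphicity is required, the desired contradiction. The main obstacle is securing absolute convergence of the zero sum at $k=1$ in (2-b), which just barely fails without Ramanujan--Petersson; the rest is careful bookkeeping of signs and of which residue dominates in each of the cases $r=0$ versus $r\geq 1$.
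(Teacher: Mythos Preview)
Your computation of $\widetilde{p_1}(s)$ is off: from the definition of $p_1$ one gets $\widetilde{p_1}(s)=\frac{(s-\omega)(s-\omega-1)}{(s+\omega)(s+\omega-1)}$, not $\frac{s(s-1)}{(s+\omega)(s+\omega-1)}$. Consequently the zeros of $\widetilde{p_r}(\tfrac12-s)$ also cancel the poles of $\Lambda(f,\tfrac12-s-\omega)$, and there are \emph{no} order-$r$ poles at $s=\pm\tfrac12\mp\omega$; equivalently, $\Theta_{f,\omega}=\xi(f,\cdot-\omega)/\xi(f,\cdot+\omega)$ has poles only at shifted nontrivial zeros. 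In particular the claimed dominant $x^{1/2+\omega}$ term (for $r\geq1$) does not exist. For part~(1), Landau is the right tool, but you have not ruled out that the putative pole at $s_*$ is cancelled by a simultaneous zero of the numerator $\xi(f,\cdot-\omega)$; the paper handles exactly this by choosing $\rho$ of minimal real part in a bounded box and, if cancellation occurs, using the functional equation to produce a zero with strictly smaller real part in the same box.

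The more serious gap is in (2-a)/(2-b). Your explicit formula shifts the contour past the critical-line zeros, producing a sum whose residues $R_\gamma$ involve $\xi(f,\rho-2\omega)/\xi'(f,\rho)$. There is no known lower bound for $|\xi'(f,\rho)|$ (indeed, zeros need not even be simple), so ``convexity-type bounds'' cannot secure absolute convergence of $\sum_\gamma R_\gamma/(\omega-i\gamma)^k$ for any $k$. The paper avoids this entirely: it shifts only to the line $\Re(s)=\tfrac12$, where the functional equation forces $|\Theta_{f,\omega}(\tfrac12+it)|=1$. Then $\Theta_{f,\omega}(s)/(s-\tfrac12)^k$ is absolutely integrable on $\Re(s)=\tfrac12$ for $k\geq2$, the only residue collected is $P_k(\log x)\sim\varepsilon(f)(\log x)^{k-1}$ at $s=\tfrac12$, and the Riemann--Lebesgue lemma makes the remaining line integral $o(1)$. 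For (2-b) with $d=1$, $k=1$, the paper shifts to $\Re(s)=\tfrac12-\delta$ with $0<\delta<\omega$ (still to the \emph{right} of the zeros at $\Re(s)=\tfrac12-\omega$); Ramanujan--Petersson enters only through the Lindel\"of-type bound $L(f,\sigma+it)^{\pm1}\ll|t|^{\epsilon}$, which yields $h_{f,\omega}^{\langle1\rangle}(x)=\varepsilon(f)+O(x^{-B})$.
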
 
\begin{remark} 
Let $L(s,\phi)$ be a self-dual $L$-function attached 
to a normalized Hecke eigen holomorphic cusp form $\phi$ of weight $k$ and level $q$. 
If $\phi(iy)$ has a single sign on $(0,\infty)$, e.g. the Ramanujan delta function,  
$\Delta(z)=e^{2\pi i z} \prod_{n=1}^{\infty} (1-e^{2\pi i n z})^{24}$, 
we have $L(s,\phi)\not=0$ for $s \in \R$ by the integral formula 
$\Lambda(s,f) = \int_{0}^{\infty} y^{(k-1)/2} \phi( iy q^{-1/2} ) \, y^{s-1} \, dy$.   
\end{remark}
By Kaczorowski-Perelli~\cite{MR1710182}, the assertion of (2-b) for $k=1$ is essentially (2) of Theorem \ref{thm_1} and \ref{thm_2}. 
One of obvious advantage of cases of $d=1$; the Riemann zeta function ($f={\mathbf 1}$) and Dirichlet $L$-functions ($f=\chi$), is that 
we can define functions $h_{f,\omega}^{\langle 1 \rangle}$ by elementally ways only   
because of the simplicity of the coefficients $\lambda_{f}(n)$ and the gamma factor $\gamma(f,s)$. 
\smallskip

It is not obvious whether the condition $k \geq 2$ in (2-a) is relaxed to $k \geq 1$ for any $d>1$ 
by a technical reason. 
However, we obtain the following result at least.  
\begin{theorem} \label{thm_4} 
Let $L(f,s)$ be a self-dual $L$-function. 
Assume that the GRH is valid for $L(f,s)$. 
Then the function 
\begin{equation} \label{215}
R_{f,\omega}(x) := h_{f,\omega}^{\langle 1 \rangle}(x) - \varepsilon(f)\cdot{\mathbf 1}_{[1,\infty)}(x) \
\end{equation}
belongs to $L^2((1,\infty), x^{-1} dx)$ for every $0< \omega < 1/2$, 
where $\varepsilon(f)$ is the sign of the functional equation in {\rm (L-4)} and ${\mathbf 1}_{[1,\infty)}$ is the characteristic function of $[1,\infty)$. 
In other words, for large $x \geq 1$, $h_{f,\omega}^{\langle 1 \rangle}$ has the definite sign $\varepsilon(f)$  in the sense of $L^2$. 

Conversely, if \eqref{215} belongs to $L^2((1,\infty),x^{-1}dx)$ for every $\omega_0 < \omega < 1/2$, 
Then $L(f,s) \not =0$ in the right-half plane $\Re(s) > 1/2 + \omega_0$. 
\end{theorem}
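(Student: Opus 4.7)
The plan is to treat everything through the Mellin transform and apply Plancherel on the critical line $\Re(s)=0$. Working initially in a left half-plane of absolute convergence for the Dirichlet series $\sum c_{f,\omega}(n)n^{-s}$, I identify the Mellin transform of $h_{f,\omega}^{\langle 0\rangle}$ as
\[
H_{f,\omega}(s) \;=\; \tilde{p}_1(1/2-s)^{r}\cdot \frac{\Lambda(f,\,1/2+s+\omega)}{\Lambda(f,\,1/2+s-\omega)},
\]
where $\tilde{p}_1$ is the Mellin transform of $p_1$ and $r$ is the order of the pole of $\Lambda(f,s)$ at $s=1$: the convolution in \eqref{210} becomes a product, the beta integrals collapse the gamma factors into the complete $\Lambda$, and the Dirichlet series supplies the ratio $L(f,1/2+s+\omega)/L(f,1/2+s-\omega)$. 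A direct calculation gives $\tilde{p}_1(\omega)=\tilde{p}_1(1+\omega)=0$, so $\tilde{p}_1(1/2-s)^{r}$ precisely cancels the poles of $\Lambda(f,1/2+s+\omega)$ at $s=1/2-\omega$ and $s=-1/2-\omega$. Integrating from $1$ and using $h_{f,\omega}^{\langle 1\rangle}(1)=0$ yields
\[
\widetilde{R}_{f,\omega}(s)\;:=\;\int_0^\infty R_{f,\omega}(x)\,x^{s-1}\,dx \;=\; \frac{\varepsilon(f)-H_{f,\omega}(s)}{s}, \qquad \Re(s)<0,
\]
and the functional equation together with $\tilde{p}_1(1/2)=1$ gives $H_{f,\omega}(0)=\varepsilon(f)$, so $s=0$ is a removable singularity.

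For the forward direction, GRH is used only to ensure the denominator $\Lambda(f,1/2+s-\omega)$ has no zeros in $\Re(s)\leq 0$ (its zeros lie on $\Re(s)=\omega>0$), which together with the cancellation above makes $H_{f,\omega}$ holomorphic in an open neighbourhood of $\{\Re(s)\leq 0\}$. On the imaginary axis, the functional equation combined with self-duality ($\overline{\Lambda(f,\bar s)}=\Lambda(f,s)$, $\varepsilon(f)=\pm1$) gives
\[
\frac{\Lambda(f,1/2+\omega+it)}{\Lambda(f,1/2-\omega+it)} \;=\; \varepsilon(f)\cdot\frac{\Lambda(f,1/2+\omega+it)}{\overline{\Lambda(f,1/2+\omega+it)}},
\]
which is unimodular, so $|H_{f,\omega}(it)|=|\tilde{p}_1(1/2-it)|^{r}$. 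This quantity is uniformly bounded and tends to $1$ as $|t|\to\infty$, hence $(\varepsilon(f)-H_{f,\omega}(it))/(it)$ is bounded near $t=0$ and $O(1/|t|)$ at infinity, and so lies in $L^2(\R,dt)$. Mellin--Plancherel then yields $R_{f,\omega}\in L^2((0,\infty),x^{-1}dx)$, and since $R_{f,\omega}$ vanishes on $(0,1)$ this is the claimed $L^2$-bound on $(1,\infty)$.

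For the converse, suppose $L(f,s_0)=0$ with $s_0=\sigma_0+i\gamma_0$ and $\sigma_0>1/2+\omega_0$, and choose $\omega=\sigma_0-1/2\in(\omega_0,1/2)$. The companion zero $1-s_0$ of $\Lambda(f,\cdot)$ provided by the functional equation produces a pole of $\Lambda(f,1/2+s-\omega)^{-1}$, hence of $H_{f,\omega}$, at $s=-i\gamma_0\in i\R$. Because $R_{f,\omega}$ is supported on $[1,\infty)$, Paley--Wiener identifies $R_{f,\omega}\in L^2((1,\infty),x^{-1}dx)$ with the statement that $\widetilde{R}_{f,\omega}$ is the boundary value of an $H^2$-function on $\Re(s)<0$, which forbids its meromorphic continuation from having poles on $i\R$. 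The pole at $s=-i\gamma_0$ contradicts this when $\gamma_0\neq0$; when $\gamma_0=0$ the same pole lands at $s=0$ and raises the order of the singularity of $\widetilde{R}_{f,\omega}$ there beyond what the factor $1/s$ can absorb, again contradicting $H^2$-membership.

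The main technical obstacle is the analytic-continuation bookkeeping needed to identify $\widetilde{R}_{f,\omega}$ with $(\varepsilon(f)-H_{f,\omega}(s))/s$ across the entire strip from $\Re(s)\ll0$ to $\Re(s)=0$: this requires vertical-strip growth estimates for $H_{f,\omega}$, obtained from Stirling for the gamma factors, convexity bounds for $L(f,s)$, and a Phragm\'en--Lindel\"of argument. Everything else is a contour-shift-plus-Plancherel computation in the spirit of the Kaczorowski--Perelli work already cited in the paper for the case $d=1$, $k=1$.
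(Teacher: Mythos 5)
Your forward direction is sound and is essentially the paper's argument in different packaging: both rest on the facts that, under GRH, $\Theta_{f,\omega}(s)=\xi(f,s-\omega)/\xi(f,s+\omega)$ is holomorphic on $\Re(s)\geq 1/2$, equals $\varepsilon(f)$ at $s=1/2$, and is unimodular on the critical line, so that $(\Theta_{f,\omega}(s)-\varepsilon(f))/(s-1/2)$ has square-integrable boundary values. The paper reaches the same conclusion by splitting the truncated Perron integral from the proof of Theorem \ref{thm_3} (2-a) into the residue $\varepsilon(f)$, a small half-circle term, and two critical-line integrals, and checking each piece lies in $L^2((1,\infty),x^{-1}dx)$; the analytic-continuation bookkeeping you flag is exactly what Lemma \ref{lem_501} and Lemma \ref{lem_502} supply, so that part of your plan is fillable.

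The converse, however, has a genuine gap. With $\omega=\sigma_0-1/2$, the zero $1-s_0$ of the denominator $\Lambda(f,1/2+s-\omega)$ does put a \emph{candidate} pole of $H_{f,\omega}$ at $s=-i\gamma_0$, but the numerator $\Lambda(f,1/2+s+\omega)$ vanishes at the very same point: there $1/2+s+\omega=\sigma_0-i\gamma_0=\overline{s_0}$, which is a zero of $\Lambda(f,\cdot)$ because $f$ is self-dual and hence has real Dirichlet coefficients. Moreover $\mathrm{ord}_{\overline{s_0}}\Lambda=\mathrm{ord}_{s_0}\Lambda=\mathrm{ord}_{1-s_0}\Lambda$ by conjugation symmetry and the functional equation, so numerator and denominator vanish to exactly the same order and $H_{f,\omega}$ is regular and nonzero at $-i\gamma_0$: no pole, no contradiction. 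The same cancellation kills your $\gamma_0=0$ variant. This is precisely why the paper does not try to land a pole on the boundary line. It uses Widder's $L^2$ Landau-type theorem only to conclude that $(\Theta_{f,\omega}(s)-\varepsilon(f))/(s-1/2)$ is pole-free in the \emph{open} half-plane $\Re(s)>1/2$, and then runs the reflection and minimality argument of Theorem \ref{thm_3} (1): choose a zero $\rho$ of minimal real part in a truncated box, pick $\omega$ with $1/2+\omega<\Re(\rho)<1/2+\omega+(\omega-\omega_0)$ so that the candidate pole at $\rho-\omega$ lies strictly inside $\Re(s)>1/2$; the forced cancellation then manufactures, via the functional equation, a new zero with strictly smaller real part still exceeding $1/2+\omega_0$, contradicting minimality. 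You need that iteration (or an equivalent device); the single-step boundary-pole argument does not close.
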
 

In Theorem \ref{thm_3}, we specified the special type function $g_{f,\omega}^{\langle k \rangle}$ of \eqref{216} as a weight in \eqref{101}. 
As found in \eqref{203} or \eqref{206}, 
the weight $g_{f,\omega}^{\langle k \rangle}$ has quite different form  
comparing with any other weights in the introduction 
and other usual weights studied in analytic number theory 
(see \cite[\S5.1]{MR2378655}, for example). 
One may consider that it is possible to extend Theorem \ref{thm_3} 
to more wide class of weights.  
Moreover one may wonder why we introduced the parameter $\omega$ (and $k$) which complicate the statements of results. 

However we have a positive reason for the restriction of weights to the special $g_{f,\omega}^{\langle k \rangle}$. 
%with the parameter $\omega$ (and $k$). 
It is in the connection between the weighted summatory functions $h_{f,\omega}^{\langle k \rangle}$ 
and the theory of model subspaces of the Hardy space on the upper half plane 
generated by meromorphic inner functions. In this connection the case of $k=1$ has a particular importance. 

A model subspace is the orthogonal complement of some invariant subspace of the Hardy space. 
The theory of model subspaces is one of fruitful area of function analysis and operator theory 
(see \cite{MR1864396, MR1892647}, for example). 
%such that a lot of rich theory arising from it. 
The existence of such background is a remarkable advantage of our summatory functions with the special weights. 
In general, it is hard to prove the monotonicity of $\int h$  
if it is sufficient or equivalent to the GRH for some zeta/$L$-function. 
A reason of such difficulty may be in a situation that 
we can not reduce the monotonicity of $\int h$ to other plausible problem inside/outside number theory, of course, 
except for an essential difficulty of the GRH itself.  
For example, as far as the author know, 
the monotonicity of primitive functions of \eqref{102}, \eqref{103}, and \eqref{104} 
are not reduced to other reasonable problems inside/outside number theory. 
See the forthcoming paper \cite{Su} for bridges between weighted summatory functions $h_{f,\omega}^{\langle 1 \rangle}$ 
and the theory of model subspaces. 
%
%%%%%%%%%%%%%%%%%%%%%%%%%%%%%%%%%%%%%%%%%%%%%%%%%%%%%%%%%%%%%%%%%%%%%%%%%%%%%%%%%%%%%%
%
\section{Preliminaries} \label{section_3}
%
%%%%%%%%%%%%%%%%%%%%%%%%%%%%%%%%%%%%%%%%%%%%%%%%%%%%%%%%%%%%%%%%%%%%%%%%%%%%%%%%%%%%%%
%
\noindent
{\bf Notation.} We denote by $s=\sigma+it$ the complex variable 
with the real part $\sigma$ and the imaginary part $t$, 
and use $\epsilon$ to express arbitrary small positive real number.  
For a positive valued function $g(x)$, 
we use Landau's $f(x)=O(g(x))$ and Vinogradov's $f(x) \ll g(x)$ as the same meaning in the usual sense. 
Also we use Landau's $f(x)=o(g(x))$ for $x \to \infty$ 
in the meaning that for any $\epsilon$ there exists 
$x_\epsilon>0$ such that $|f(x)| \leq \epsilon g(x)$ for $x \geq x_\epsilon$.  
\medskip

The following lemmas are used repeatedly in the later sections.

\begin{lemma} {\rm (Stirling's formula \cite[A.4 of \S5]{MR2061214})} Let $-\infty<\sigma_1<\sigma_2<\infty$. We have
\begin{equation} \label{301}  
\Gamma(\sigma+it) = \sqrt{2\pi} \, |t|^{\sigma+it-1/2}e^{-(\pi/2)|t|-it + {\rm sgn}(t) i(\pi/2)(\sigma-1/2)} (1+O(|t|^{-1}))
\end{equation}
for $\sigma_1 \leq \sigma \leq \sigma_2$ and $|t| \geq 1$. 
\end{lemma}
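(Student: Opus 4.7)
The plan is to deduce this strip version of Stirling's formula from the classical sectorial asymptotic
\[
\log\Gamma(s) = \bigl(s - \tfrac{1}{2}\bigr)\log s - s + \tfrac{1}{2}\log(2\pi) + O(|s|^{-1}),
\]
valid uniformly in any sector $|\arg s| \leq \pi - \delta$, which follows for instance from Binet's second integral formula for $\log\Gamma$ (or, equivalently, from the Euler--Maclaurin expansion truncated at the first term).

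With $s = \sigma + it$, $\sigma \in [\sigma_1,\sigma_2]$ and $|t| \geq 1$, I would expand the principal branch of $\log s$ using $|s|^2 = t^2 + \sigma^2$ together with $\arg s = \mathrm{sgn}(t)\,\pi/2 - \mathrm{sgn}(t)\arctan(\sigma/|t|)$, obtaining
\[
\log s = \log|t| + i\,\mathrm{sgn}(t)\,\tfrac{\pi}{2} - i\,\tfrac{\sigma}{t} + O(|t|^{-2}).
\]
Multiplying by $s - 1/2 = \sigma - 1/2 + it$ and subtracting $s$, a careful bookkeeping --- in which the product $it \cdot (-i\sigma/t) = \sigma$ exactly cancels the real $-\sigma$ coming from $-s$ --- yields
\[
\log\Gamma(\sigma + it) = \bigl(\sigma - \tfrac{1}{2}\bigr)\log|t| + it\log|t| - \tfrac{\pi}{2}|t| - it + i\,\mathrm{sgn}(t)\,\tfrac{\pi}{2}\bigl(\sigma - \tfrac{1}{2}\bigr) + \tfrac{1}{2}\log(2\pi) + O(|t|^{-1}).
\]
Finally I would exponentiate, recognizing that $(\sigma - 1/2)\log|t| + it\log|t|$ reassembles as $\log|t|^{\sigma + it - 1/2}$ and absorbing the additive $O(|t|^{-1})$ error into the multiplicative factor $1 + O(|t|^{-1})$. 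This produces the formula stated in the lemma.

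The step I expect to watch most carefully is the bookkeeping: every error term appearing in the expansion of $\log s$ eventually gets multiplied by a factor of magnitude $\asymp |t|$, so one must check that $s \cdot O(|t|^{-2}) = O(|t|^{-1})$ and, in particular, that no cross-term survives at a size larger than $1/|t|$ to degrade the precision of the leading $it\log|t|$ contribution. Uniformity in $\sigma \in [\sigma_1,\sigma_2]$ is automatic, since every implicit constant in the expansions above depends only on the chosen bounds $\sigma_1,\sigma_2$ for $\sigma$.
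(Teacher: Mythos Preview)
Your derivation is correct: expanding $\log s$ about $it$ to second order, multiplying by $s-\tfrac12$, and tracking the cancellation of the real $\sigma$ against $it\cdot(-i\sigma/t)$ gives exactly the displayed asymptotic, with all implicit constants uniform on $[\sigma_1,\sigma_2]$. The paper, however, does not prove this lemma at all; it simply quotes it from Iwaniec--Kowalski \cite[A.4 of \S5]{MR2061214} and uses it as a black box. So your proposal is not so much a different route as the route the paper chose to omit, and it supplies precisely the standard argument behind the cited reference.
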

%
%\begin{lemma}[Phragm\'en-Lindel\"of convexity principle] 
%Let $\sigma_1<\sigma_2$ and $t_0 \geq 2$.  
%Put $B = \{s=\sigma+it \in \C \,|\, \sigma_1 \leq \sigma \leq \sigma_2,\,t \geq t_0 \}$. 
%Let $f(s)$ be a continuous function on $B$ which is analytic in the interior of $B$. 
%Suppose that $|f(s)| \leq A$ on the boundary of $B$, and 
%\[
%|f(s)| \leq c_1 \exp\exp(c_2 t) \quad \text{for} \quad 
%s=\sigma + it \in B, 
%\] 
%where $c_1$ and $c_2$ are positive constants satisfying 
%$c_2 < \pi/(\sigma_2 - \sigma_1)$. 
%Then $|f(s)| \leq A$ for all points of $B$. 
%\end{lemma}
%
\begin{lemma} {\rm (\!\!\cite[(5.35) of p.195]{MR0352890})}
For $\Re(s+\alpha)>0$ and $\Re(\beta-\alpha)>0$, we have 
\begin{equation} \label{302}
\frac{\Gamma(s+\alpha)}{\Gamma(s+\beta)} 
= \frac{1}{\Gamma(\beta-\alpha)} \int_{0}^{1}  x^\alpha (1-x)^{\beta-\alpha-1}\, x^s \, \frac{dx}{x}. 
\end{equation}
\end{lemma}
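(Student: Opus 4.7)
The plan is to recognize the right-hand side as Euler's classical beta integral in disguise. After combining the factors $x^\alpha$ and $x^s/x = x^{s-1}$, the integrand on the right becomes $x^{(s+\alpha)-1}(1-x)^{(\beta-\alpha)-1}$. Thus the integral is precisely
\begin{equation*}
\int_0^1 x^{(s+\alpha)-1}(1-x)^{(\beta-\alpha)-1}\,dx = B(s+\alpha,\,\beta-\alpha),
\end{equation*}
which by Euler's formula for the beta function equals $\Gamma(s+\alpha)\Gamma(\beta-\alpha)/\Gamma(s+\beta)$. Dividing through by $\Gamma(\beta-\alpha)$ yields the claimed identity.

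I would then verify that the convergence hypotheses of the beta integral are exactly the hypotheses of the lemma: the factor $x^{(s+\alpha)-1}$ is integrable near $x=0$ iff $\Re(s+\alpha)>0$, and the factor $(1-x)^{(\beta-\alpha)-1}$ is integrable near $x=1$ iff $\Re(\beta-\alpha)>0$. So the two stated conditions are precisely what is needed for the right-hand side to make sense, and no analytic continuation argument is required.

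There is no real obstacle here — the only point requiring a moment of care is the bookkeeping of exponents (the $x^{-1}$ from the Haar measure $dx/x$ combines with $x^\alpha \cdot x^s$ to give the correct beta-integrand $x^{s+\alpha-1}$), and the identification of $p = s+\alpha$, $q = \beta-\alpha$ in Euler's formula $B(p,q) = \Gamma(p)\Gamma(q)/\Gamma(p+q)$. Since the author cites this as (5.35) on p.195 of Titchmarsh, I would simply record the derivation in one or two lines and cite Euler's beta integral as the underlying input.
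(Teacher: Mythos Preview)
Your argument is correct: the integral is exactly Euler's beta integral $B(s+\alpha,\beta-\alpha)$, and the stated hypotheses are precisely the conditions for convergence at the two endpoints. One small correction: the reference \cite{MR0352890} is Oberhettinger's \emph{Tables of Mellin Transforms}, not Titchmarsh. The paper does not supply a proof at all---it simply quotes the identity from that table---so your derivation via the beta integral is a genuine (if standard) addition rather than a recapitulation of anything in the text.
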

%
%%%%%%%%%%%%%%%%%%%%%%%%%%%%%%%%%%%%%%%%%%%%%%%%%%%%%%%%%%%%%%%%%%%%%%%%%%%%%%%%%%%%%%
%
\section{On the first half of Theorem \ref{thm_3}} \label{section_4}
%
%%%%%%%%%%%%%%%%%%%%%%%%%%%%%%%%%%%%%%%%%%%%%%%%%%%%%%%%%%%%%%%%%%%%%%%%%%%%%%%%%%%%%%
%
In this section, we prove Theorem \ref{thm_3} (1). We define the entire function $\xi(f,s)$ by 
\[
\xi(f,s) := s^{r}(s-1)^r \Lambda(f,s),
\]
where $r$ is the order of the pole of $\Lambda(f,s)$ at $s=1$ (see (L-4)), and put 
\begin{equation} \label{401}
\Theta_{f,\omega}(s) := \frac{\xi(f,s-\omega)}{\xi(f,s+\omega)} 
= \left[\frac{(s-\omega)(s-\omega-1)}{(s+\omega)(s+\omega-1)}\right]^r \frac{\Lambda(f,s-\omega)}{\Lambda(f,s+\omega)}. 
\end{equation}
\begin{lemma} \label{lem_401} Let $0<\omega<1/2$ and $k \in \N$. 
Functions $g_{f,\omega}^{\langle k \rangle}$ of \eqref{216} and $h_{f,\omega}^{\langle k \rangle}$ of \eqref{214} 
are continuous functions on $(0,\infty)$ supported on $(0,1]$ and $[1,\infty)$, respectively.  
\end{lemma}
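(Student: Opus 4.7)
The plan is to prove both assertions by induction on $k \in \N$, with $k=1$ as the base case; the definitions start at $k=0$, but the lemma asserts continuity only from $k=1$ onward, where the integral operators \eqref{216} and \eqref{214} smooth out the potential singularities of $g_{f,\omega}^{\langle 0 \rangle}$ and $h_{f,\omega}^{\langle 0 \rangle}$. The two structural facts I will need are: (i) a description of the edge behaviour of $g_{f,\omega}^{\langle 0 \rangle}$ at $x = 1$, and (ii) that $h_{f,\omega}^{\langle 0 \rangle}$ is a \emph{finite} sum at every $x>0$, because $g_{f,\omega}^{\langle 0 \rangle}$ is supported on $(0,1]$.

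For the base case I first analyse $g_{f,\omega}^{\langle 0 \rangle}$. Each factor $g_{f,\omega,j}$ is continuous on $(0,1)$, vanishes on $[1,\infty)$, and has an integrable edge singularity $(1-x^2)^{\omega-1}$ at $x=1$ (since $\omega>0$). Multiplicative convolution of two functions supported in $(0,1]$ is again supported in $(0,1]$, since $(F\ast G)(x)=\int_{x}^{1} F(x/y)G(y)\,y^{-1}dy$ is empty for $x>1$. A standard Beta-function computation shows $(F \ast G)(x) \sim B(\alpha,\beta)(1-x)^{\alpha+\beta-1}$ as $x\to 1^{-}$ whenever $F(y)\sim (1-y)^{\alpha-1}$, $G(y)\sim (1-y)^{\beta-1}$ with $\alpha,\beta>0$. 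Iterating over the $d$ factors, and observing that $p_r$ is the sum of a Dirac mass at $x=1$ (an identity for multiplicative convolution) and locally bounded terms supported on $(0,1)$ which do not dominate the endpoint asymptotic, gives that $g_{f,\omega}^{\langle 0 \rangle}$ is continuous on $(0,1)$, supported on $(0,1]$, and satisfies $g_{f,\omega}^{\langle 0 \rangle}(x) = O\bigl((1-x)^{d\omega-1}\bigr)$ as $x\to 1^{-}$. In particular it is locally integrable on $(0,\infty)$.

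From here, continuity of $g_{f,\omega}^{\langle 1 \rangle}(x) = x^{-1/2}\int_{x}^{1} y^{-1/2} g_{f,\omega}^{\langle 0 \rangle}(y)\,dy$ on $(0,\infty)$ follows from dominated convergence on compact subintervals, with $g_{f,\omega}^{\langle 1 \rangle}(1^{-})=0$ secured by the integrable endpoint singularity. For $h_{f,\omega}^{\langle 0 \rangle}$, only indices $n\leq x$ contribute to the sum, so on any compact $[a,b]\subset(0,\infty)$ it is a finite sum of terms $c_{f,\omega}(n)\,g_{f,\omega}^{\langle 0 \rangle}(n/x)$, each continuous off $x=n$ with an integrable singularity as $x\to n^{+}$. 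Thus $h_{f,\omega}^{\langle 0 \rangle}$ is locally integrable on $(0,\infty)$ and vanishes on $(0,1)$, and $h_{f,\omega}^{\langle 1 \rangle}(x)=\int_{1}^{x} h_{f,\omega}^{\langle 0 \rangle}(y)\,y^{-1}dy$ is continuous by the fundamental theorem of calculus and supported on $[1,\infty)$. The identity \eqref{217} for $k=1$ is then obtained by exchanging the finite sum with the integral and performing the change of variable $u=n/y$.

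For the inductive step $k\geq 2$, the hypothesis that $g_{f,\omega}^{\langle k-1 \rangle}$ and $h_{f,\omega}^{\langle k-1 \rangle}$ are continuous (hence locally bounded) on $(0,\infty)$ makes continuity of the integral transforms \eqref{216} and \eqref{214} immediate from dominated convergence, and the support properties are preserved by inspection; the level-$k$ form of \eqref{217} follows from the level-$(k-1)$ form by the same finite-sum/Fubini argument. The main obstacle is the edge analysis in the base case — pinning down the Beta-type asymptotic for the $d$-fold multiplicative convolution in the presence of the distributional piece $p_r$ — and, once that is done, tracking the finite-sum cancellations in \eqref{217} through the change of variable.
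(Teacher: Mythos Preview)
Your proof is correct and follows essentially the same strategy as the paper: reduce to $k=1$ via \eqref{216}--\eqref{217}, establish that $g_{f,\omega}^{\langle 0 \rangle}$ has an integrable singularity at $x=1$ (you via an explicit Beta-type endpoint asymptotic for the iterated convolution, the paper via closure of $L^1((0,1),x^{-1}dx)$ under multiplicative convolution), and then conclude continuity of $g_{f,\omega}^{\langle 1 \rangle}$ and hence of the finite sum $h_{f,\omega}^{\langle 1 \rangle}$. Your edge-behaviour analysis is more detailed than necessary---mere local integrability near $x=1$ suffices---but the overall architecture is the same.
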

\begin{proof} By \eqref{217}, 
it is sufficient to prove that $g_{f,\omega}^{\langle k \rangle}$ is a continuous 
function on $(0,1]$ and $\displaystyle{\lim_{x\to1^-} g_{f,\omega}^{\langle k \rangle}(x)=0}$. 
By \eqref{216}, it is reduced to the case $k=1$, and we have 
\begin{equation} \label{402}
\sqrt{x} \, g_{f,\omega}^{\langle 1 \rangle}(x)=\int_{x}^{1} \sqrt{y} \,
g_{f,\omega}^{\langle 0 \rangle}(y) \, \frac{dy}{y}.
\end{equation}
Because $L^1((0,1),x^{-1}dx)$ is closed under the multiplicative convolution, 
$g_{f,\omega}^{\langle 0 \rangle}$ belongs to $L^1((0,1),x^{-1}dx)$ by definition \eqref{210} and the assumption $0<\omega<1/2$. 
Hence the right-hand side of \eqref{402} is a continuous function on $(0,1)$ and tends to zero as $x \to 1^-$. 
\end{proof}
\begin{lemma} Let $0<\omega<1/2$ and $k \in \N$. 
We have 
\begin{equation} \label{403}
\int_{1}^{\infty} h_{f,\omega}^{\langle k \rangle}(x) \, x^{\frac{1}{2}-s} \, \frac{dx}{x} = \frac{\Theta_{f,\omega}(s)}{(s-1/2)^k}   
\end{equation}
together with the absolute convergence of the integral for sufficiently large $\Re(s)>0$. 
Under the Ramanujan-Petersson conjecture, the region of the absolute convergence is relaxed to 
the right-half plane $\Re(s)>1+\omega$.  
\end{lemma}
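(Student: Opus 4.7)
The plan is to compute the Mellin transform of $h_{f,\omega}^{\langle k\rangle}$ piece by piece, reducing the base case $k=0$ to the building blocks defining $g_{f,\omega}^{\langle 0\rangle}$ and then passing to general $k$ by induction via Fubini.

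For the base case, I would first evaluate
\[
\int_0^1 g_{f,\omega,j}(x)\,x^{s-1}\,dx
= \frac{2\pi^\omega}{\Gamma(\omega)}\int_0^1 x^{s-\omega+\kappa_j-1}(1-x^2)^{\omega-1}\,dx
= \pi^\omega\,\frac{\Gamma((s-\omega+\kappa_j)/2)}{\Gamma((s+\omega+\kappa_j)/2)}
\]
after the substitution $u=x^2$ and an appeal to the beta integral \eqref{302}; the product over $1\le j\le d$ collapses to $\gamma(f,s-\omega)/\gamma(f,s+\omega)$. Next, a short partial fractions calculation yields
\[
\tilde p_1(s) = 1 - \frac{2\omega(1-2\omega)}{s+\omega-1} - \frac{2\omega(1+2\omega)}{s+\omega} = \frac{(s-\omega)(s-\omega-1)}{(s+\omega)(s+\omega-1)},
\]
so $\tilde p_r(s) = \tilde p_1(s)^r$ by the multiplicative convolution theorem. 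Splitting $n = dm$ in \eqref{212} and invoking the Euler product \eqref{211} gives the Dirichlet identity $\sum_n c_{f,\omega}(n)n^{-s} = L(f,s-\omega)/L(f,s+\omega)$ in its region of absolute convergence.

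Combining the pieces: for $k=0$, plugging the definition \eqref{213} into the Mellin integral, interchanging sum and integral, and using scale-invariance of the Haar measure $dx/x$ to obtain $\int_0^\infty g_{f,\omega}^{\langle 0\rangle}(n/x)x^{-s}\,dx/x = n^{-s}\tilde g_{f,\omega}^{\langle 0\rangle}(s)$, the result is
\[
q(f)^{-\omega}\left[\frac{(s-\omega)(s-\omega-1)}{(s+\omega)(s+\omega-1)}\right]^r\frac{\gamma(f,s-\omega)\,L(f,s-\omega)}{\gamma(f,s+\omega)\,L(f,s+\omega)},
\]
and the identity $\Lambda(f,s)=q(f)^{s/2}\gamma(f,s)L(f,s)$ absorbs the factor $q(f)^{-\omega}$ exactly, producing $\Theta_{f,\omega}(s)$ as defined in \eqref{401}. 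For $k\ge 1$ I would proceed by induction: writing $h_{f,\omega}^{\langle k\rangle}$ via \eqref{214} and swapping the order of integration by Fubini gives
\[
\int_1^\infty h_{f,\omega}^{\langle k\rangle}(x)\,x^{1/2-s}\,\frac{dx}{x}
= \int_1^\infty h_{f,\omega}^{\langle k-1\rangle}(y)\,\frac{dy}{y}\int_y^\infty x^{-1/2-s}\,dx,
\]
and the inner integral equals $y^{1/2-s}/(s-1/2)$ whenever $\Re(s)>1/2$, contributing the promised extra factor $1/(s-1/2)$.

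The main obstacle is the convergence analysis. For $k=0$ the support of $g_{f,\omega}^{\langle 0\rangle}$ in $(0,1]$ makes the defining sum for $h_{f,\omega}^{\langle 0\rangle}(x)$ finite, and the trivial bound available from the absolute convergence of $L(f,s)$ in $\Re(s)>1$ (together with $|\alpha_{f,i}(p)|<p$) gives $|c_{f,\omega}(n)|\ll n^{\omega+A}$ for some $A$ depending only on $d$, hence $h_{f,\omega}^{\langle 0\rangle}(x)\ll x^{B}$ for some explicit $B$; this secures absolute convergence of the Mellin integral for $\Re(s)$ sufficiently large and, by the inductive step, for all $k\ge 1$. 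The sharper threshold $\Re(s)>1+\omega$ requires the Ramanujan-Petersson bound $|\lambda_f(n)|\ll_\epsilon n^\epsilon$, which together with the divisor-type estimate $|c_{f,\omega}(n)|\ll_\epsilon n^{\omega+\epsilon}$ reduces the issue to convergence of $\sum n^{\omega+\epsilon-\Re(s)}$. The interchanges of sum with integral in the base step and of the double integral in the inductive step are then justified by standard dominated-convergence arguments within this joint half-plane of absolute convergence.
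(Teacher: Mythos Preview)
Your proof is correct and follows essentially the same approach as the paper: both compute the Mellin transforms of the building blocks $g_{f,\omega,j}$ and $p_1$ via the beta integral \eqref{302}, invoke the multiplicative convolution theorem to assemble $g_{f,\omega}^{\langle 0\rangle}$, and verify the Dirichlet series identity \eqref{410}. The only variation is in how the parameter $k$ is handled: the paper derives the closed form \eqref{407} for $g_{f,\omega}^{\langle k\rangle}$ and convolves once more against \eqref{408} to produce \eqref{409}, whereas you induct directly on $h_{f,\omega}^{\langle k\rangle}$ via the recursion \eqref{214} and Fubini---both routes manufacture the factor $(s-1/2)^{-k}$ equally well.
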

\begin{proof}
Put $\gamma_{j}(f,s): = \pi^{-s/2} \Gamma((s+\kappa_j)/2)$ for $1 \leq j \leq d$ 
such that $\gamma(f,s) = \prod_{j=1}^{d}\gamma_j(f,s)$. 
Applying \eqref{302} to functions $g_{f,j,\omega}$ of \eqref{209}, 
we have 
\begin{equation} \label{404}
\int_{0}^{1}  g_{f,j,\omega}(x) \, x^{s} \, \frac{dx}{x}  
= \frac{\gamma_j(f,s-\omega)}{\gamma_j(f,s+\omega)} 
\quad \text{for} \quad \Re(s)>1+\omega,   
\end{equation}
since $\Re(\kappa_j) > -1$ (see (L-2)). On the other hand, 
% 
%\begin{equation} \label{f_p}
%\frac{1}{s} = \int_{0}^{1} x^s \, \frac{dx}{x} \quad (\Re(s)>0), 
%\quad 
%\frac{1}{s(s-1)} = \int_{0}^{1} \left( -1 + \frac{1}{x} \right) \, x^{s} \frac{dx}{x}, 
%\end{equation}
%
%and the equality 
%\[
%\frac{(s-\omega)(s-\omega-1)}{(s+\omega)(s+\omega-1)} = 1 - \frac{2\omega}{s+\omega} - \frac{2\omega}{s+\omega-1} 
%+ \frac{4\omega^2}{(s+\omega)(s+\omega-1)}
%\]
we have 
\begin{equation} \label{405}
\int_{0}^{1} p_1(x) \, x^s \frac{dx}{x} = \frac{(s-\omega)(s-\omega-1)}{(s+\omega)(s+\omega-1)} 
\quad \text{for} \quad \Re(s)>1+\omega
\end{equation}
by elementary ways. 
Applying Theorem 44 of \cite{MR942661} to \eqref{404} and \eqref{405} with definition \eqref{210}, 
we obtain 
\begin{equation} \label{406}
\int_{1}^{\infty} g_{f,\omega}^{\langle 0 \rangle}(x) \, x^s \, \frac{dx}{x} 
= \left[ \frac{(s-\omega)(s-\omega-1)}{(s+\omega)(s+\omega-1)} \right]^r \frac{\gamma(f,s-\omega)}{\gamma(f,s+\omega)} 
\quad \text{for} \quad \Re(s)>1+\omega.
\end{equation} 
Using \eqref{216} repeatedly, we have 
\begin{equation} \label{407}
g_{f,\omega}^{\langle k \rangle}(x) = \frac{(-1)^{k-1}}{(k-1)!}\int_{x}^{1} \sqrt{\frac{y}{x}} \left(\log \frac{x}{y}\right)^{k-1} 
g_{f,\omega}^{\langle 0 \rangle}(y) \, \frac{dy}{y}, 
\end{equation}
while  
\begin{equation} \label{408}
\frac{(-1)^k}{(k-1)!} \int_{0}^{1} x^{-1/2} (\log x)^{k-1} \, x^{s} \frac{dx}{x} = \frac{1}{(s-1/2)^k} 
\quad \text{for} \quad \Re(s)>1/2.  
\end{equation}
Therefore, by applying Theorem 44 of \cite{MR942661} again to \eqref{406} and \eqref{408} with \eqref{407}, we obtain 
\begin{equation} \label{409}
\int_{1}^{\infty} g_{f,\omega}^{\langle k \rangle}(x) \, x^s \, \frac{dx}{x} 
= \frac{1}{(s-1/2)^k}\left[ \frac{(s-\omega)(s-\omega-1)}{(s+\omega)(s+\omega-1)} \right]^r \frac{\gamma(f,s-\omega)}{\gamma(f,s+\omega)} 
\quad \text{for} \quad \Re(s)>1+\omega.
\end{equation} 
By definition \eqref{211} and \eqref{212}, we have 
\begin{equation} \label{410}
\sum_{n=1}^{\infty} \frac{c_{f,\omega}(n)}{n^s} 
= \sum_{m=1}^{\infty} \frac{\lambda_f(m)}{m^{s-\omega}} \sum_{n=1}^{\infty} \frac{\mu_f(n)}{n^{s+\omega}}
= \frac{L(f,s-\omega)}{L(f,s+\omega)} 
\end{equation}
as an equality of formal Dirichlet series. 
However all Dirichlet series in \eqref{410} converge absolutely 
if $\Re(s)>0$ is sufficiently large, since we assumed $|\alpha_{f,i}(p)|<p$ (see (L-1)). 

Under the Ramanujan-Petersson conjecture, the region of the absolute convergence of the Dirichlet series \eqref{410} 
is relaxed to $\Re(s)>1+\omega$, 
since $\lambda_f(n) \ll_\epsilon n^{\epsilon}$ and $\mu_f(n) \ll_\epsilon n^{\epsilon}$. 
Hence, \eqref{217}, \eqref{409}, \eqref{410}, and Fubini's theorem derive the assertion we needed.   
\end{proof}
\begin{lemma} \label{lem_402}
The function $g_{\omega}$ of \eqref{203} $($resp. $g_{\chi,\omega}$ of \eqref{206}$)$ 
is equal to $g_{f,\omega}^{\langle 1 \rangle}$ of \eqref{214} for $f={\mathbf 1}$ $($resp. $f=\chi)$.  
\end{lemma}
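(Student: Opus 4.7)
The plan is to identify both functions via Mellin transforms, leveraging the already-computed Mellin transform of $g_{f,\omega}^{\langle 1 \rangle}$ from the previous lemma. Since Lemma \ref{lem_401} guarantees that $g_{f,\omega}^{\langle 1 \rangle}$ and the candidate closed-form expression are both continuous on $(0,\infty)$ and supported in $(0,1]$, the Mellin inversion theorem reduces the claim to an equality of Mellin transforms in a right half-plane. The two cases split naturally by the value of $r$ in $(L\text{-}4)$.

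For $f=\chi$, we have $r=0$, so $p_r$ is the identity element of multiplicative convolution and $g_{\chi,\omega}^{\langle 0\rangle}$ is just $g_{\chi,\omega,1}(x) = \frac{2\pi^{\omega}}{\Gamma(\omega)}\,x^{-\omega+\delta}(1-x^2)^{\omega-1}$ on $(0,1)$. Substituting this into \eqref{216} and performing the change of variables $u=y^{2}$ transforms the one-dimensional integral directly into an incomplete beta function and yields \eqref{206} on the nose; no Mellin inversion is required. For $f=\mathbf{1}$, we have $r=1$ and the convolution with $p_1$ prevents a one-step computation, so the plan is to compute the Mellin transform of $g_\omega$ in \eqref{203} and match it with the expression
\[
\frac{1}{s-1/2}\cdot\frac{(s-\omega)(s-\omega-1)}{(s+\omega)(s+\omega-1)}\cdot\pi^{\omega}\,\frac{\Gamma((s-\omega)/2)}{\Gamma((s+\omega)/2)}
\]
forced by \eqref{409} with $k=1$, $d=1$, $\kappa_1=0$. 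For each of the two incomplete-beta terms in \eqref{203}, Fubini's theorem combined with the substitution $u=x^{2}$ reduces the Mellin transform to $\frac{1}{A}B\bigl(p+A,\omega\bigr)$ for explicit shifted parameters $A,p$, producing rational factors $\frac{1}{s+\omega-1}$ and $\frac{1}{s-1/2}$ multiplied by a common ratio $\Gamma((s-\omega+2)/2)/\Gamma((s+\omega+2)/2)$. The functional equation $\Gamma(z+1)=z\Gamma(z)$ then converts this ratio into $\frac{s-\omega}{s+\omega}\cdot\frac{\Gamma((s-\omega)/2)}{\Gamma((s+\omega)/2)}$.

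The main obstacle is the algebraic step that collapses the two terms into the expected form. Putting them over the common denominator $(s-1/2)(s+\omega-1)$, the numerator becomes
\[
4\omega(s-1/2)-(2\omega+1)(s+\omega-1),
\]
and the crux is the factorization $-2\omega^{2}-\omega+1=-(2\omega-1)(\omega+1)$, which rewrites this numerator as $(2\omega-1)(s-\omega-1)$. The factor $(2\omega-1)$ cancels against the prefactor $\frac{4\omega}{2\omega-1}$ in \eqref{203}, leaving precisely $\frac{s-\omega-1}{(s-1/2)(s+\omega-1)}\cdot\frac{s-\omega}{s+\omega}\cdot\pi^{\omega}\frac{\Gamma((s-\omega)/2)}{\Gamma((s+\omega)/2)}$, i.e.\ exactly the target Mellin transform. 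This identity also retroactively explains the coefficient $\frac{2\omega+1}{4\omega}$ in the definition \eqref{203}. Equality of Mellin transforms on the strip $\Re(s)>1+\omega$, together with the continuity and support properties supplied by Lemma \ref{lem_401}, then yields $g_\omega = g_{\mathbf{1},\omega}^{\langle 1\rangle}$ identically on $(0,\infty)$.
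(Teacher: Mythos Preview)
Your argument is correct and follows essentially the same route as the paper: identify the two candidates through their Mellin transforms using the beta-integral formula \eqref{302} and the convolution theorem (Titchmarsh, Theorem~44), which is exactly what the paper's one-line proof invokes. Your write-up simply unpacks that line, carrying out the algebra in the $f=\mathbf{1}$ case explicitly and handling $f=\chi$ by a direct substitution $u=y^{2}$ in \eqref{216} rather than by Mellin inversion; both variations are fine.
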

\begin{proof}
This is a simple consequence of \eqref{302} and \cite[Theorem 44]{MR942661} by \eqref{201} and \eqref{202}. 
\end{proof}

\noindent
{\bf Proof of Theorem \ref{thm_3} (1).} 
We prove that $\Lambda(f,s)\not=0$ for any $s \in \C$ in the strip $1/2+\omega_0 < \Re(s) \leq 1$ by contradiction. 
Note that $\Lambda(f,s) \not= 0$ for $\Re(s)>1$ by the Euler product of $L(f,s)$ with its convergence condition 
and the assumption on the gamma factor $\gamma(f,s)$.  
By the assumptions of Theorem \ref{thm_3} (1), 
a well-known theorem of Landau (see \cite[\S5 of Chap.II]{MR0005923}, for example) and formula \eqref{403} imply that 
$\Theta_{f,\omega}(s)$ 
has no poles in the right-half plane $\Re(s)>1/2$ for every $\omega_0<\omega<1/2$.   
\medskip

Suppose that there exists a zero $\rho$ of $\Lambda(f,s)$ such that $1/2 + \omega_0 < \Re(\rho) \leq 1$ and 
$|\Im(\rho)|>0$. 
We take some $T > |\Im(\rho)|>0$. Then the set
\[
{\mathcal Z}_{T}:=\left\{
s \in \C ~\left|~ 
\Lambda(f,s)=0,~\frac{1}{2}+\omega_0 <\Re(s) \leq 1,~|\Im(\rho)|<T
\right.
\right\}
\]
is non-empty and finite. 
Therefore we may assume that $\Re(\rho)$ is minimal in ${\mathcal Z}_T$ 
by replacing $\rho$ by another zero in ${\mathcal Z}_T$ if necessary.  
\medskip

Obviously it is possible to take $(0\leq) ~\omega_0 < \omega < 1/2$ such that 
\begin{equation} \label{411}
\frac{1}{2} + \omega_0 < \frac{1}{2} + \omega < \Re(\rho) < \frac{1}{2} + \omega + (\omega - \omega_0). 
\end{equation}
For such $\omega$, we put $\tilde{s}:=\rho-\omega$. 
Then $\Lambda(f,\tilde{s}+\omega)=\Lambda(f,\rho)=0$ with 
\begin{equation} \label{412}
\frac{1}{2}< \Re(\tilde{s}) < \frac{1}{2}+\omega - \omega_0.
\end{equation}
This implies $\Lambda(f,\tilde{s}-\omega)=0$, 
since $|\Im(\rho)|>0$ and $\Theta_{f,\omega}(s)$ has no poles on $\Re(s)>1/2$. 
By the functional equation,   
$\Lambda(f,1-\tilde{s}+\omega) = \varepsilon(f) \Lambda(f,\tilde{s}-\omega)=0$. 
Hence $1-\tilde{s}+\omega$ is a zero of $\Lambda(f,s)$ having a nonzero imaginary part. 
For this zero, \eqref{412} implies 
\[
\frac{1}{2} + \omega_0 < \Re(1 - \tilde{s} +\omega) < \frac{1}{2} + \omega \,(< 1).
\]
This contradicts the choice of $\rho$ by \eqref{411}. \hfill $\Box$
%
%
%%%%%%%%%%%%%%%%%%%%%%%%%%%%%%%%%%%%%%%%%%%%%%%%%%%%%%%%%%%%%%%%%%%%%%%%%%%%%%%%%%%%%%
%
\section{On the latter half of Theorem \ref{thm_3} and Theorem \ref{thm_4}} \label{section_5}
%
%%%%%%%%%%%%%%%%%%%%%%%%%%%%%%%%%%%%%%%%%%%%%%%%%%%%%%%%%%%%%%%%%%%%%%%%%%%%%%%%%%%%%%
%
In this section, we prove Theorem \ref{thm_3} (2-a), (2-b), 
and Theorem \ref{thm_4}.  
We start from the preparation of lemmas. 

\begin{lemma} \label{lem_501} 
Let $k \in \N$. 
Suppose that $c_{f,\omega}(n)=O(\psi_{f,\omega}(n))$ for some positive valued arithmetic function $\psi_{f,\omega}$. 
If $c>0$ and $T>0$ are sufficiently large, we have 
\begin{equation*}
\aligned
h_{f,\omega}^{\langle k \rangle}(x) 
& =  \frac{1}{2\pi i}\int_{c-iT}^{c+iT} \frac{\Theta_{f,\omega}(s)}{(s-1/2)^k} \, x^{s-\frac{1}{2}} \, ds 
 + O\left(\frac{x^{c-\frac{1}{2}}}{T^{k+d\omega}(c-1-\omega)^r}\right) \\
& \quad  + O\left(\frac{\psi_{f,\omega}(2x)\, x^{1/2}\log x}{T^{k+d\omega}} \right)
 + O\left(\frac{\psi_{f,\omega}(x)}{T^{k+d\omega}\sqrt{x}}\right)
\endaligned
\end{equation*}
for $x>1$ with $x \not\in \Z$.  
\end{lemma}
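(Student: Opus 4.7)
The approach is a Perron-type inversion argument applied to the representation in \eqref{217}. First I would write $g_{f,\omega}^{\langle k \rangle}(n/x)$ as the inverse Mellin transform of its Mellin transform \eqref{409} along the vertical line $\Re(s)=c$, with $c$ chosen large enough that the Dirichlet series \eqref{410} for $\sum c_{f,\omega}(n)\,n^{-s}$ converges absolutely. Truncating this contour integral to $[c-iT,\,c+iT]$ and interchanging with the outer sum over $n$ (justified by Fubini), the factorization
\[
q(f)^{-\omega}\,G_{f,\omega}^{\langle k \rangle}(s)\cdot \frac{L(f,s-\omega)}{L(f,s+\omega)} \,=\, \frac{\Theta_{f,\omega}(s)}{(s-1/2)^k}
\]
obtained from the definitions of $\Theta_{f,\omega}$ and of $G_{f,\omega}^{\langle k \rangle}$ identifies the stated main term.

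Next I would estimate the error from this truncation. Stirling's formula applied to the gamma-factor ratio in \eqref{409} yields the decay $|G_{f,\omega}^{\langle k \rangle}(c+it)| \ll |t|^{-k-d\omega}$ on $\Re(s)=c$, together with a matching bound for its $t$-derivative. For $y=n/x$ with $|\log y|$ bounded below, one integration by parts in $t$ on the tail $|t|>T$ yields a tail bound of order $y^{-c}/(|\log y|\,T^{k+d\omega})$. The three stated error terms would then come from partitioning the $n$-sum into three pieces. \emph{Far range:} $n \leq x/2$ or $n \geq 2x$, on which $|\log(n/x)| \gg 1$; here the pole of $L(f,s-\omega)$ at $s=1+\omega$ forces $\sum_n |c_{f,\omega}(n)|\,n^{-c} \ll (c-1-\omega)^{-r}$, giving the first error. \emph{Moderately near range:} $x/2 < n < 2x$ with $n$ not the integer closest to $x$, where $|\log(n/x)| \gg |n-x|/x$; here the trivial bound $|c_{f,\omega}(n)| \leq \psi_{f,\omega}(2x)$ combined with $\sum_{1\le m\ll x} m^{-1} \ll \log x$ yields the second error. \emph{Nearest integer:} the single term $n=\lfloor x \rfloor$ (or $\lceil x \rceil$), which supplies the third error.

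The main obstacle lies in the nearest-integer term. There $|\log y| = O(1/x)$, so the integration-by-parts gain is too weak by a factor of $x$ to give the desired $T^{-k-d\omega}$ shape directly. My plan is to combine a trivial bound on the truncated tail integral with the continuity of $g_{f,\omega}^{\langle k \rangle}$ at $y=1^-$, where it vanishes by Lemma \ref{lem_401}, so that the near-boundary value of $g_{f,\omega}^{\langle k \rangle}(n/x)$ contributes only a small factor; balancing these two estimates is what produces the somewhat unusual denominator $\sqrt{x}$ in the third error term. This is precisely the step where the hypothesis $x \notin \Z$ enters, keeping $y$ strictly away from the exact singular point $1$. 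The remaining technical point, the interchange of sum and integral, is routine once $c$ is chosen past the abscissa of absolute convergence of \eqref{410}.
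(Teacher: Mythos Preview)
Your approach coincides with the paper's: invert $g_{f,\omega}^{\langle k\rangle}$ by Mellin, truncate the vertical integral using Stirling plus an oscillatory estimate (the paper applies the first derivative test where you propose integration by parts, which is the same device for a monotone amplitude), then insert into \eqref{217} and split the resulting error sum into the three standard ranges as in Titchmarsh~\S3.12. The identification of the main term via \eqref{409}, \eqref{410} and \eqref{401} is exactly what the paper does.

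Your discussion of the nearest-integer term, however, misidentifies where the $x^{-1/2}$ in the third error comes from. That factor is simply the prefactor $x^{-1/2}$ already sitting in \eqref{217}, applied to the single summand $n\approx x$ (for which $(x/n)^{c}\approx 1$); there is no balancing against the boundary behaviour of $g_{f,\omega}^{\langle k\rangle}$, and the vanishing $g_{f,\omega}^{\langle k\rangle}(1^{-})=0$ from Lemma~\ref{lem_401} plays no role here. The correct way to handle the blown-up factor $|\log(n/x)|^{-1}$ for that one term is the classical device: the oscillatory tail integral also admits the trivial (non-oscillatory) bound obtained by putting absolute values inside, so for the nearest integer one simply takes the minimum of the oscillatory and trivial bounds, exactly as in the $\min\bigl(1,\,T^{-1}|\log(x/n)|^{-1}\bigr)$ appearing in the usual Perron remainder. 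This is what the paper means by ``a standard way''; your proposed use of the continuity of $g_{f,\omega}^{\langle k\rangle}$ would not by itself yield any saving in $T$.
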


\begin{proof}
By the Stirling formula \eqref{301}, 
in any vertical strip of finite width,  
there exists $T_0 \geq 1$ such that 
\begin{equation} \label{501}
\frac{\gamma(f,s-\omega)}{\gamma(f,s+\omega)} 
= (2\pi)^{d\omega}|t|^{-d\omega} e^{-\frac{\pi i d\omega}{2} {\rm sgn}(t)}(1+O(|t|^{-1})) \quad \text{for} \quad |t| \geq T_0.
\end{equation}
By \eqref{407} and the summability of $|g_{f,\omega}^{\langle 0 \rangle}(x)|$ on $(\epsilon,1)$, 
$g_{f,\omega}^{\langle k \rangle}$ belongs to $C^1(0,1)$ and is of locally bounded variation. 
Therefore the Mellin inversion formula of \eqref{409} holds for $0<x<1$ and $c>1+\omega$ (\!\!\cite[Theorem 28]{MR942661}):
\[
g_{f,\omega}^{\langle k \rangle}(x) = \frac{1}{2\pi i}\int_{c-i\infty}^{c+i\infty} \frac{1}{(s-1/2)^k}
\left[ \frac{(s-\omega)(s-\omega-1)}{(s+\omega)(s+\omega-1)} \right]^r
\frac{\gamma(f,s-\omega)}{\gamma(f,s+\omega)} 
\, x^{-s} \, ds.
\]
Note that we may understand as $T \geq T_0$, since we assumed that $T$ is sufficiently large. 
By \eqref{501}, we have 
\[
\aligned
\frac{1}{2 \pi i} & \int_{c+iT}^{c+i\infty} \frac{1}{(s-1/2)^k}\left[ \frac{(s-\omega)(s-\omega-1)}{(s+\omega)(s+\omega-1)} \right]^r
\frac{\gamma(f,s-\omega)}{\gamma(f,s+\omega)} 
\, x^{-s} \, ds \\
& = x^{-c} (2\pi)^{d\omega-1}e^{-\frac{\pi i d\omega}{2}} \int_{T}^{\infty} \frac{t^{-d\omega}}{(c-1/2+it)^k}
 (1+O(|t|^{-1})) \, x^{-it} \, dt \\ 
& = x^{-c} (2\pi)^{d\omega-1}e^{-\frac{\pi i d\omega}{2}} \int_{T}^{\infty} \frac{i^k \, t^{k-d\omega}}{((c-1/2)^2+t^2)^k} \, x^{-it} \, dt
+ O\left( x^{-c}T^{-k-d\omega} \right).
\endaligned
\]
The function $t^{k-d\omega}/((c-1/2)^2+t^2)^k$ in the integral of the right-hand side decreases monotonically    
if $|t|>T_1$ for some $T_1>0$. Therefore 
\[
\aligned
\left|
\int_{T}^{\infty} \frac{t^{k-d\omega}}{((c-1/2)^2+t^2)^k} \, x^{-it} \, dt 
\right| \leq  \frac{4}{\log x}\frac{T^{k-d\omega}}{((c-1/2)^2+T^2)^k}
\endaligned
\]
by the first derivative test (see \cite[\S2.1]{MR1994094}, for example) if $T \geq {\rm max}\{T_0,T_1\}$. Hence 
\[
\aligned
g_{f,\omega}^{\langle k \rangle}(x) & = \frac{1}{2\pi i}\int_{c-iT}^{c+iT} \frac{1}{(s-1/2)^k}
\left[ \frac{(s-\omega)(s-\omega-1)}{(s+\omega)(s+\omega-1)} \right]^r
\frac{\gamma(f,s-\omega)}{\gamma(f,s+\omega)} \, x^{-s} \, ds \\ 
& \quad + O\left(\frac{1}{x^c}\frac{1}{T^{k+d\omega}}\right) + O\left(\frac{1}{x^c\log x}\frac{1}{T^{k+d\omega}}\right). 
\endaligned
\]
By \eqref{213}, \eqref{401}, and \eqref{410}, we have 
\[
\aligned
h_{f,\omega}^{\langle k \rangle}(x) & = \frac{1}{2\pi i}\int_{c-iT}^{c+iT} \frac{\Theta_{f,\omega}(s)}{(s-1/2)^k} x^{s-\frac{1}{2}} \, ds  \\ 
& \quad + O\left(\frac{x^{c-1/2}}{T^{k+d\omega}} \sum_{n=1}^{\infty} \frac{|\psi_{f,\omega}(n)|}{n^c} \right) 
+ O\left(\frac{x^{c-1/2}}{T^{k+d\omega}} \sum_{n=1}^{\infty} \frac{|\psi_{f,\omega}(n)|}{n^c|\log (n/x)|}\right)  
\endaligned
\]
since $c$ is large. 
Sums in the error terms are estimated by a standard way (see \cite[\S3.12]{MR882550}, for example), 
and then we obtain the desired formula. 
\end{proof}

\begin{lemma} \label{lem_502} 
Let $0<\omega<1/2$. 
Assume that the GRH of $L(f,s)$ is valid. 
Then we have $|\Theta_{f,\omega}(s)|<1$ for $\Re(s) > 1/2$, 
and $|\Theta_{f,\omega}(s)|=1$ for $\Re(s)=1/2$. 
\end{lemma}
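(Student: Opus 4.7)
The plan is to treat $\Theta_{f,\omega}$ as an inner function for the right half-plane $\Re(s) > 1/2$, proceeding in two parts: first modulus $1$ on the critical line via the functional equation, then $|\Theta_{f,\omega}| \leq 1$ inside via Phragm\'en--Lindel\"of, with strictness upgraded by the open mapping theorem.

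For the boundary identity, I would exploit the key symmetry that for $s = 1/2 + it$, we have $\overline{s+\omega} = 1 - (s-\omega)$. Since $L(f,s)$ is self-dual, the coefficients $\lambda_f(n)$ are real and the $\kappa_j$ come in real or conjugate pairs, so $\overline{\Lambda(f,w)} = \Lambda(f,\bar w)$. Combining this with the functional equation $\Lambda(f,w) = \varepsilon(f)\Lambda(f,1-w)$ gives
\begin{equation*}
\overline{\Lambda(f,s+\omega)} = \Lambda(f,\overline{s+\omega}) = \Lambda(f, 1-(s-\omega)) = \varepsilon(f)\,\Lambda(f,s-\omega),
\end{equation*}
so $|\Lambda(f,s-\omega)/\Lambda(f,s+\omega)| = 1$ on $\Re(s) = 1/2$. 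For the rational factor, the same symmetry $\overline{s+\omega} = 1-(s-\omega)$ yields $|s-\omega-1| = |1+\omega-s| = |1-s+\omega|$ (and similarly $|s+\omega-1| = |1-s-\omega|$), so $|(s-\omega)(s-\omega-1)| = |(s+\omega)(s+\omega-1)|$ on the line. Hence $|\Theta_{f,\omega}(s)| = 1$ on $\Re(s) = 1/2$.

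For the interior bound, I would first observe that under GRH every zero of $\xi(f,\cdot)$ lies on $\Re = 1/2$, so zeros of $\xi(f,s+\omega)$ lie on $\Re(s) = 1/2 - \omega < 1/2$. Consequently $\Theta_{f,\omega}$ is holomorphic on $\Re(s) \geq 1/2$. To control vertical growth I would apply Stirling's formula \eqref{301}: the gamma-ratio $\gamma(f,s-\omega)/\gamma(f,s+\omega)$ decays like $|t|^{-d\omega}$ in any vertical strip, while the $L$-ratio is bounded for $\Re(s) > 1+\omega$ and at most polynomially bounded in $|t|$ for $1/2 < \Re(s) \leq 1+\omega$ by the convexity bound (or by GRH-based sharp bounds of the form $\log|L(f,s\pm\omega)| = O(\log|t|)$). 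The rational prefactor is $O(1)$. Thus $\Theta_{f,\omega}$ is of finite order in the half-plane with subexponential growth, and Phragm\'en--Lindel\"of for a half-plane (applied to $\Re(s) \geq 1/2$ with $|\Theta_{f,\omega}| = 1$ on the boundary) yields $|\Theta_{f,\omega}(s)| \leq 1$ for $\Re(s) \geq 1/2$.

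Strict inequality follows from the open mapping theorem: $\Theta_{f,\omega}$ is not constant, since along the real axis the gamma-ratio $\gamma(f,\sigma-\omega)/\gamma(f,\sigma+\omega) \sim \text{const}\cdot\sigma^{-d\omega/2}$ tends to $0$ while the $L$- and rational factors tend to finite limits, so $\Theta_{f,\omega}(\sigma)\to 0$ as $\sigma \to +\infty$. A non-constant holomorphic function whose modulus is bounded by $1$ cannot attain the value $1$ in the interior of its domain, giving $|\Theta_{f,\omega}(s)| < 1$ throughout $\Re(s) > 1/2$. The main obstacle is the growth check needed to invoke Phragm\'en--Lindel\"of, which requires pairing Stirling with polynomial-in-$|t|$ bounds for the $L$-ratio across the critical strip; everything else is structural from self-duality and the location of zeros under GRH.
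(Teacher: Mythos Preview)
Your argument is correct and takes a genuinely different route from the paper. The paper's proof is essentially a citation: it applies Theorem~4 of Lagarias--Suzuki \cite{MR2220265} to $\xi(f,s)$ for the strict inequality, and invokes the functional equation $\xi(f,s)=\varepsilon(f)\,\xi(f,1-s)$ in one line for the boundary identity. That cited result proceeds via the Hadamard factorization of the order-one entire function $\xi(f,s)$: under GRH the zeros $\rho$ lie on $\Re(\rho)=1/2$, and $\Theta_{f,\omega}$ becomes, up to a unimodular constant, the convergent product $\prod_\rho (s-\omega-\rho)/(s+\omega-\rho)$, each factor having modulus $<1$ when $\Re(s)>1/2$. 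Your Phragm\'en--Lindel\"of route avoids this product machinery and is self-contained, at the price of needing the GRH-based bound $1/L(f,s+\omega)\ll|t|^\epsilon$ for the growth estimate (convexity alone does not control the reciprocal; this is where GRH enters beyond mere holomorphicity). Two minor points: your growth discussion treats only vertical strips, whereas half-plane Phragm\'en--Lindel\"of also requires control as $\sigma\to\infty$; this is harmless since the full Stirling asymptotic $\Gamma(w+a)/\Gamma(w+b)\sim w^{a-b}$ gives $\gamma(f,s-\omega)/\gamma(f,s+\omega)\ll|s|^{-d\omega}$ uniformly in $\Re(s)\geq 1/2$, so $\Theta_{f,\omega}$ is in fact bounded there and the hypothesis is amply satisfied. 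Also, along the real axis the decay is $\sigma^{-d\omega}$, not $\sigma^{-d\omega/2}$. The Hadamard approach is more structural and immediately exhibits $\Theta_{f,\omega}$ as a Blaschke product (hence inner in the model-subspace sense alluded to after Theorem~\ref{thm_4}); yours stays within standard maximum-modulus tools.
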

\begin{proof}
Recall that $\varepsilon(f) \in \{\pm 1\}$ by the self-duality of $L(f,s)$. 
Applying Theorem 4 of \cite{MR2220265} to $\xi(f,s)$, 
we obtain $|\Theta_{f,\omega}(s)|<1$ for $\Re(s) > 1/2$. 
Using the functional equation $\xi(f,s)= \varepsilon(f) \xi(f,1-s)$ in \eqref{401}, 
we obtain $|\Theta_{f,\omega(s)}|=1$ on $\Re(s)=1/2$.
\end{proof}

\begin{lemma} \label{lem_503} 
Assume that the Ramanujan-Petersson conjecture and the GRH for $L(f,s)$. 
For any $\epsilon>0$ we have 
\begin{equation}
L(f,\sigma+it) \ll  
\begin{cases}
|t|^{d \epsilon} & \text{if} \quad \sigma \geq 1/2,~|t| \to \infty, \\[5pt]
|t|^{d (\frac{1}{2}-\sigma+\epsilon)} & \text{if} \quad \sigma <1/2,~|t| \to \infty,  
\end{cases}
\end{equation}
where the implied constant depends on $f$ and $\epsilon$. 
We can take $\epsilon=0$ if $\sigma>1$ or $\sigma<0$. 
Moreover 
\begin{equation}
\frac{1}{L(f,\sigma + it)} \ll |t|^{d\epsilon} \quad (|t| \to \infty)
\end{equation}
in the right-half plane $\sigma \geq 1/2 + \epsilon$. 
\end{lemma}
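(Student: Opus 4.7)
My plan is to partition the $\sigma$-axis into regions and bridge them with the Phragm\'en--Lindel\"of convexity principle, using self-duality to replace $\bar{f}$ by $f$ in the functional equation. For $\sigma>1$, the Euler product combined with Ramanujan--Petersson ($|\alpha_{f,i}(p)|\leq 1$) gives $|L(f,\sigma+it)|\leq \zeta(\sigma)^d=O(1)$, and the same bound applies to $|L(f,\sigma+it)|^{-1}$; this handles the $\epsilon=0$ case on the right.

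The core step is the Lindel\"of-type bound $L(f,1/2+it)\ll |t|^{d\epsilon}$ on the critical line under GRH. Since $\Lambda(f,s)$ is of order $1$ by (L-4), it admits a Hadamard factorization whose non-trivial zeros, by GRH, all lie on $\Re(s)=1/2$, with $O(\log|t|)$ of them satisfying $|\gamma-t|\leq 1$. A Borel--Carath\'eodory argument applied to $\log L(f,s)$ in a fixed disk centered at $1+\delta+it$ (using the $O(1)$ bound at the right edge and controlling the contribution of nearby zeros) yields $\log|L(f,1/2+it)|\ll \log|t|/\log\log|t|$, well below $\epsilon\log|t|$. Symmetric considerations give the matching lower bound $|L(f,1/2+\epsilon+it)|\gg |t|^{-d\epsilon}$ required for the reciprocal estimate. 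Phragm\'en--Lindel\"of applied in the strip $1/2\leq\sigma\leq 1+\delta$ (resp.\ $1/2+\epsilon\leq\sigma\leq 1+\delta$) then propagates these bounds throughout the claimed region $\sigma\geq 1/2$ (resp.\ $\sigma\geq 1/2+\epsilon$).

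For $\sigma<1/2$, I would use the functional equation
\begin{equation*}
L(f,s) = \varepsilon(f)\, q(f)^{1/2-s}\,\frac{\gamma(f,1-s)}{\gamma(f,s)}\, L(f,1-s),
\end{equation*}
together with Stirling's formula \eqref{301} applied to each of the $d$ gamma factors, which gives
\begin{equation*}
\left|\frac{\gamma(f,1-s)}{\gamma(f,s)}\right| \asymp |t|^{d(1/2-\sigma)} \quad (|t|\to\infty).
\end{equation*}
For $\sigma<0$, $\Re(1-s)>1$ yields $L(f,1-s)=O(1)$, hence the clean bound $|t|^{d(1/2-\sigma)}$ with $\epsilon=0$; for $0\leq\sigma<1/2$, $\Re(1-s)\geq 1/2$ gives $L(f,1-s)\ll|t|^{d\epsilon}$ from the preceding step, hence $|t|^{d(1/2-\sigma+\epsilon)}$.

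The main obstacle is the Lindel\"of estimate on $\Re(s)=1/2$: this is the classical ``RH implies Lindel\"of'' theorem (Littlewood for $\zeta$) extended to general self-dual $L$-functions in the Iwaniec--Kowalski axiomatic framework. The ingredients needed---order-$1$ growth of $\Lambda(f,s)$, the functional equation, and boundedness for $\Re(s)>1$ under Ramanujan--Petersson---are all supplied by (L-1)--(L-4), so the classical Borel--Carath\'eodory argument adapts without essential modification.
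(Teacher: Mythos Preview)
Your proposal is correct and follows essentially the same route as the paper: the paper simply cites Corollary~5.20 and Theorem~5.19 of Iwaniec--Kowalski \cite{MR2061214} for the Lindel\"of bound on $L(f,s)$ and the bound on $1/L(f,s)$ under GRH, then uses the functional equation with Stirling for $\sigma<0$ and absolute convergence for $\sigma>1$, exactly as you do. The only difference is cosmetic---you unpack the Borel--Carath\'eodory/Hadamard-product argument behind those cited results, while the paper treats them as black boxes.
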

\begin{proof} 
For $0 \leq \sigma \leq 1$, the estimate for $L(f,s)$ is a consequence of 
Corollary  5.20 of \cite{MR2061214} and the Phragmen-Lindel\"of convexity principle. 
We have $L(f,\sigma+it) \ll 1$ for $\sigma>1$ by the absolute convergence of the Dirichlet series, 
and $L(f,\sigma+it) \ll |t|^{d(1/2-\sigma)}$ for $\sigma<0$ 
by the functional equation and the Stirling formula \eqref{301}.  
The estimate for $L(f,s)^{-1}$ is a consequence of Theorem 5.19 of \cite{MR2061214}. 
\end{proof}

\noindent
{\bf Proof of Theorem \ref{thm_3} (2-a).}
Let $0<\omega<1/2$, $0<\delta<\omega$, and $T \geq  {\rm max}\{T_0,  T_1\}$, 
where $T_0$ and $T_1$ are positive real numbers appeared in the proof of Lemma \ref{lem_501}. 

We consider the positively oriented closed path consisting of 
the vertical line from $c - iT$ to $c + iT$,
the horizontal line from $c + iT$ to $1/2 + iT$, 
the vertical line from $1/2+iT$ to $1/2+i\delta$, 
the counter-clockwise left-half circle ${\mathcal C}_\delta$ of radius $\delta$ around $s=1/2$, 
the vertical line from $1/2-i\delta$ to $1/2-iT$, 
and the horizontal line from $1/2 - iT$ to $c - iT$. 
In the interior of the closed path 
$\Theta_{f,\omega}(s)/(s-1/2)^k$ has no poles except for the pole of order $k$ ($\geq 2$) at $s = 1/2$, 
since $\Theta_{f,\omega}(s)$ has no poles in the right-half plane $\Re(s)>1/2-\omega$ by the GRH for $L(f,s)$ 
and $\Theta_{f,\omega}(1/2) = \varepsilon(f) \in \{\pm 1\}$ by the functional equation of $\Lambda(f,s)$. 
Thus the residue theorem gives
\begin{equation} \label{temp_1}
\aligned
\frac{1}{2\pi i} & \int_{c-iT}^{c+iT} \frac{\Theta_{f,\omega}(s)}{(s-1/2)^k} \, 
x^{s-\frac{1}{2}} \, ds = P_k(\log x) \\
& + \frac{1}{2\pi i} 
\left( 
-  \int_{{\mathcal C}_\delta}
+ \int_{1/2+\delta}^{1/2+iT}
+ \int_{1/2-iT}^{1/2-i\delta}
+ \int_{1/2 +iT}^{c +iT}
- \int_{1/2 -iT}^{c -iT} 
\right) 
\frac{\Theta_{f,\omega}(s)}{(s-1/2)^k} \, x^{s-\frac{1}{2}} \, ds, 
\endaligned
\end{equation}
where $P_k$ is the polynomial of degree $k-1$ with real coefficients such that 
\[
P_k(\log x) 
= \underset{s=1/2}{\rm Res}\left(\frac{\Theta_{f,\omega}(s)}{(s-1/2)^k} 
x^{s-\frac{1}{2}} \right).
\]
The leading term of $P_k(\log x)$ is 
\[
\Theta_{f,\omega}(1/2) (\log x)^{k-1} = \varepsilon(f) (\log x)^{k-1}. 
\]
By Lemma \ref{lem_502} the fourth and fifth integrals in the right-hand side of \eqref{temp_1} are estimated as 
\begin{equation} \label{temp_2}
\left(
\int_{1/2 +iT}^{c +iT}
- \int_{1/2 -iT}^{c -iT} \right) \frac{\Theta_{f,\omega}(s)}{(s-1/2)^k} \, x^{s-\frac{1}{2}} \, ds
\ll T^{-k} \int_{1/2}^{c} x^{\sigma - \frac{1}{2}} \, d\sigma 
\ll \frac{x^{c-1/2}}{T^k \log x}.  
\end{equation}
By Lemma \ref{lem_501}, \eqref{temp_1}, and \eqref{temp_2}, we obtain
\begin{equation} \label{temp_6}
\aligned
h_{f,\omega}^{\langle k \rangle}(x) 
& = P_k(\log x) + \left( 
-\int_{{\mathcal C}_\delta}
+ \int_{1/2+\delta}^{1/2+i\infty}
+ \int_{1/2-i\infty}^{1/2-i\delta}
\right) 
\frac{\Theta_{f,\omega}(s)}{(s-1/2)^k} \, x^{s-\frac{1}{2}} \, ds \\
& =: P_k(\log x) - I_1 + I_2 + I_3 
\endaligned
\end{equation}
say, by tending $T \to \infty$ for fixed $x \geq 1$. 
Here the integrals $I_2$ and $I_3$ are absolutely integrable, 
since $|\Theta_{f,\omega}(s)|=1$ on the line $\Re(s)=1/2$ by Lemma \ref{lem_502} and $k \geq 2$. 
Therefore, 
\[
I_2 + I_3 =o(1) \quad (x \to \infty)
\]
as a function of $x$ by the Riemann-Lebesgue lemma (\!\!\cite[Theorem 1]{MR942661}). 
In addition, we have  
\[
I_1  \ll \int_{0}^{\pi/2}
x^{-\delta \cos \theta} \, d\theta \ll \frac{1}{\log x} \quad (x \to \infty). 
\]
Hence we obtain 
\[
h_{f,\omega}^{\langle k \rangle}(x) = \varepsilon(f)(\log x)^{k-1} \Bigl(1+O((\log x)^{-1})\Bigr).
\]
In particular $h_{f,\omega}^{\langle k \rangle}$ does not change sign for large $x>0$. \hfill $\Box$
\bigskip

\noindent
{\bf Proof of Theorem \ref{thm_3} (2-b).} 
It is sufficient to prove the case $k=1$ only, 
since the other cases are proved by a similar way with the above. 

Let $0<\omega<1/2$, $0<\delta<\omega$, and $T \geq  {\rm max}\{T_0,  T_1\}$. 
We consider the positively
oriented rectangle with vertices at 
$c + iT$, $1/2 -\delta + iT$, $1/2 -\delta - iT$ and $c - iT$. 
In this rectangle $\Theta_{f,\omega}(s)/(s-1/2)$ has 
no poles except for the simple pole at $s=1/2$ with residue $\varepsilon(f)$ 
by a similar reason with the above proof. 
Thus the residue theorem gives
\begin{equation} \label{temp_3}
\aligned
\frac{1}{2\pi i} & \int_{c-iT}^{c+iT} \frac{\Theta_{f,\omega}(s)}{s-1/2} \,
x^{s-\frac{1}{2}} \, ds   \\
& = \varepsilon(f) + \frac{1}{2\pi i} 
\left( 
 \int_{1/2-\delta-iT}^{1/2-\delta+iT}
+ \int_{1/2-\delta+iT}^{c+iT}
- \int_{1/2-\delta-iT}^{c-iT} 
\right) 
\frac{\Theta_{f,\omega}(s)}{s-1/2} \, x^{s-\frac{1}{2}} \, ds \\
& = \varepsilon(f) + I_1 + I_2 - I_3,
\endaligned
\end{equation}
say. Recall that $d=1$. By Lemma \ref{lem_503} and the Stirling formula \eqref{301}, we have 
\begin{equation} \label{temp_4}
I_1 \ll x^{-\delta} \int_{-T}^{T} \frac{|t|^{\delta+\epsilon}}{1+|t|} \, dt 
\ll (x/T)^{-\delta}T^\epsilon,  
\end{equation}
and 
\begin{equation} \label{temp_5}
\aligned
I_2-I_3 
& \ll T^{\epsilon} \int_{1/2}^{1/2+\omega}  \frac{(x/T)^{\sigma - \frac{1}{2}}}{|\sigma-1/2+iT|} \, d\sigma 
+ T^{-\omega+\epsilon)}\int_{1/2+\omega}^{c}  \frac{x^{\sigma-\frac{1}{2}}}{|\sigma-1/2+iT|} \, d\sigma \\
& \ll \frac{(x/T)^{\omega} - 1}{T^{1-\epsilon}\log(x/T)} 
+ \frac{x^{c-\frac{1}{2}}}{T^{1+\omega-\epsilon}\log x}. 
\endaligned
\end{equation}
By Lemma \ref{lem_501}, \eqref{temp_3}, \eqref{temp_4}, and \eqref{temp_5}, we obtain
\[
\aligned
h_{f,\omega}^{\langle 1 \rangle}(x) 
& =  \varepsilon(f) 
+ O\left( (x/T)^{-\delta}T^\epsilon \right)
+ O\left( \frac{(x/T)^{\omega} - 1}{T^{1-\epsilon}\log(x/T)}\right)
+ O\left( \frac{x^{c-\frac{1}{2}}}{T^{1+\omega-\epsilon}\log x}\right)
 \\
& + O\left(\frac{x^{c-\frac{1}{2}}}{T^{1+\omega}(c-1-\omega)^r}\right)  + O\left(\frac{\psi_{f,\omega}(2x)\, x^{1/2}\log x}{T^{1+\omega}} \right)
 + O\left(\frac{\psi_{f,\omega}(x)}{T^{1+\omega}\sqrt{x}}\right).
\endaligned
\]
By the Ramanujan-Petersson conjecture 
we have $\lambda_f(n) \ll_\epsilon n^{\epsilon}$ and $\mu_f(n) \ll_\epsilon n^{\epsilon}$. 
Therefore we can take $c=1+\omega+\epsilon$ and $\psi_{f,\omega}(x) \ll x^\epsilon$. 
Hence we have 
\[
\aligned
h_{f,\omega}^{\langle 1 \rangle}(x) 
& =  \varepsilon(f) 
+ O\left( (x/T)^{-\delta}T^\epsilon \right)
+ O\left( \frac{(x/T)^{\omega} - 1}{T^{1-\epsilon}\log(x/T)}\right)
+ O\left( \frac{x^{\frac{1}{2}+\omega+\epsilon}}{T^{1+\omega-\epsilon}\log x}\right)
 \\
& + O\left(\frac{x^{\frac{1}{2}+\omega+\epsilon}}{T^{1+\omega}}\right)  
+ O\left(\frac{x^{1/2+\epsilon}\log x}{T^{1+\omega}} \right)
 + O\left(\frac{x^\epsilon}{T^{1+\omega}\sqrt{x}}\right).
\endaligned
\]
By taking $T=x^A$ for some $((1/2)+\omega+\epsilon)/(1+\omega-\epsilon) < A < \delta/(\delta+\epsilon)$ 
(roughly, for some $2/3 < A <1$ by $0<\omega<1/2$), 
we obtain
\[
h_{f,\omega}^{\langle 1 \rangle}(x) = \varepsilon(f) + O(x^{-B}) \quad (x \to \infty)  
\]
for some small $B>0$. 
In particular $h_{f,\omega}^{\langle 1 \rangle}$ does not change sign for large $x>0$. \hfill $\Box$
\bigskip

\noindent
{\bf Proof of Theorem \ref{thm_4}.} 
The first half of Theorem \ref{thm_4} is obvious 
by the proof of Theorem \ref{thm_4} (2-a). 
In fact, the integrand of integrals $I_2$ and $I_3$ in \eqref{temp_6} is $L^2$, 
and hence $I_2 + I_3$ belongs to $L^2((1,\infty),x^{-1}dx)$. 
In addition, as already found, $I_1$ in \eqref{temp_6} also belongs to $L^2((1,\infty),x^{-1}dx)$. 

We prove the latter half of Theorem \ref{thm_4}. By \eqref{403}, we have 
\begin{equation*} 
\int_{1}^{\infty} R_{f,\omega}(x) \, x^{\frac{1}{2}-s} \, \frac{dx}{x} = \frac{\Theta_{f,\omega}(s)-\varepsilon(f)}{s-1/2}   
\end{equation*}
for $s \in \C$ with large $\Re(s)>0$. 
By $R_{f,\omega} \in L^2((1,\infty),x^{-1}dx)$ and   
Theorem 10 of \cite[Chap.II]{MR0005923}, 
we find that $(\Theta_{f,\omega}(s)-\varepsilon(f))/(s-1/2)$ 
has no poles in the right-half plane $\Re(s)>1/2$. 
Then, by the argument of the proof of Theorem \ref{thm_3} (1), 
we arrive at the desired conclusion. \hfill $\Box$

%
%---------------------------------------

%---------------------------------------
%
\end{document}